\newtheorem{theorem}{Theorem}
\newtheorem{remark}{Remark}
\renewcommand{\d}{\mathrm{d}} 
\newcommand{\CC}{\mathbb{C}}
\newcommand{\RR}{\mathbb{R}}
\newcommand{\cK}{\mathcal{K}}
\newcommand{\xx}{\mathbf{x}}
\newcommand{\yy}{\mathbf{y}}
\newcommand{\uu}{\mathbf{u}}
\newcommand{\nn}{\mathbf{n}}
\newcommand{\rr}{\mathbf{r}}
\newcommand{\Brho}{\mbox{\boldmath$\rho$}}
\newcommand{\Btau}{\mbox{\boldmath$\tau$}}
\title{Zeta Correction: A New Approach to Constructing Corrected Trapezoidal Quadrature Rules for Singular Integral Operators}
\author{Bowei Wu, Per-Gunnar Martinsson}
\begin{document}

\maketitle

\begin{abstract}
A high order accurate quadrature rule for the discretization of boundary integral equations (BIEs) 
on closed smooth contours in the plane is introduced. 
This quadrature can be viewed as a hybrid of the spectral quadrature of Kress (1991) and the 
locally corrected trapezoidal quadrature of Kapur and Rokhlin (1997). 
The new technique combines the strengths of both methods, and attains high-order convergence, 
numerical stability, ease of implementation, and compatibility with the ``fast'' algorithms 
(such as the Fast Multipole Method or Fast Direct Solvers).
Important connections between the punctured trapezoidal rule and the Riemann zeta function are 
introduced, which enable a complete convergence analysis and lead to remarkably simple 
procedures for constructing the quadrature corrections.
The paper reports a detailed comparison between the new method and the methods of Kress, 
of Kapur and Rokhlin, and of Alpert (1999).
\end{abstract}

\section{Introduction}

This paper describes techniques for discretizing boundary integral equations (BIEs) of the form
\begin{equation}
\tau(\xx) + \int_{\Gamma}G(\xx,\yy)\,\tau(\yy)\,\d s(\yy) = f(\xx),\qquad\xx \in \Gamma,
\label{eq:BIEorig}
\end{equation}
where $\Gamma$ is a smooth closed contour in the plane and $\d s$ the arc length measure on $\Gamma$, where $f$ is a given smooth function, and where $G$ is a given kernel with a logarithmic singularity as $|\xx-\yy|\rightarrow 0$.
Equations such as \eqref{eq:BIEorig} commonly arise as reformulations of boundary value problems from potential theory, acoustic and electromagnetic wave propagation, fluid dynamics and many other standard problems in engineering and science.
When a PDE can be reformulated as an integral equation that is defined on the boundary of the domain, there are several advantages to doing so, in particular when the BIE is a Fredholm equation of the second kind.

A key challenge to using  \eqref{eq:BIEorig} for numerical work is that upon discretization, it leads to a system of linear equations with a dense coefficient matrix. Unless the problem is relatively small, it then becomes essential to deploy fast algorithms such as the Fast Multipole Method (FMM) \cite{rokhlin1987} or Fast Direct Solvers (FDS) \cite{2019_martinsson_book}. A second challenge is that the singularity in the kernel function $G$ means that if a standard quadrature rule is used when discretizing the integral, then only very slow convergence is attained as the number of degrees of freedom is increased.

This paper introduces a new family of quadrature rules for discretizing \eqref{eq:BIEorig} that are numerically stable even at high orders. For instance, a rule of order 42 is included in the numerical experiments. It is perfectly stable and capable of computing solutions to 14 correct digits with as few degrees of freedom as spectrally convergent quadratures such as the method of Kress \cite{kress1991boundary}. Moreover, unlike the Kress quadrature, it can easily be used in conjunction with fast solvers such as the FMM or FDS.

\subsection{Nystr\"om discretization and corrected trapezoidal rules}

Upon parameterizing the domain $\Gamma$ over an interval $[0,T]$, a BIE such as \eqref{eq:BIEorig} can be viewed as a one dimensional integral equation of the form
\begin{equation}
\tau(x) + \int_{0}^T \cK(x,y)\tau(y)\,\d y = f(x),\quad x\in[0,T].
\label{eq:BIE}
\end{equation}
In \eqref{eq:BIE}, the new kernel $\cK$ encodes both the parameterization and the original kernel $G$. Observe that all functions in \eqref{eq:BIE} are $T$-periodic, that $\tau$ and $f$ are smooth, and that $\cK$ is smooth except for a logarithmic singularity as $|x-y|\rightarrow 0$.

To discretize \eqref{eq:BIE} using the \emph{Nystr\"om method} \cite[\S12.2]{kress2014linear}, we consider the $N$-point \emph{periodic trapezoidal rule} (PTR)
\begin{equation}
\int_0^Tg(x)\,\d x \approx \sum_{n=1}^Ng(x_n)h
\label{eq:ptr}
\end{equation}
where $h=T/N$ and $x_n=nh$. When $g$ is smooth and periodic, the PTR converges super-algebraically as $N\to\infty$ \cite{trefethen2014exponentially}. The Nystr\"om method first collocates \eqref{eq:BIE} to the quadrature nodes $\{x_n\}$ of the PTR, and then approximates the integral by a quadrature supported on the same nodes with unknowns $\tau_n\approx\tau(x_n)$, yielding a linear system
\begin{equation}
\label{eq:nystromlinsys}
\tau_m + \sum_{n=1}^N \mathbf{K}(m,n)\,\tau_n = f(x_m),\quad  m= 1,\,\dots,\,N.
\end{equation}
The coefficient matrix $\mathbf{K}$ should be formed so that the approximation
\begin{equation}
\label{eq:nystromquad}
\sum_{n=1}^{N}\mathbf{K}(m,n)\,\tau(x_n)\approx
\int_{0}^T\cK(x_m\,y)\tau(y)\,\d y,
\qquad m = 1,\,\dots,\,N,
\end{equation}
holds to high accuracy.
If $\cK$ were to be smooth, this task would be easy, since we could then use the PTR \eqref{eq:ptr} without modifications and simply set
\begin{equation}
\label{eq:niceK}
\mathbf{K}(m,n) = \cK(x_m,x_n)h.
\end{equation}
In this unusual case, the solution $\{\tau_n\}$ of \eqref{eq:nystromlinsys} will converge super-algebraically to $\{\tau(x_n)\}$ as $N\to\infty$.

In the more typical case where $\cK(x,y)$ is logarithmically singular at $y=x$, some additional work is required to attain high order convergence in \eqref{eq:nystromquad}. Let us start by describing two existing methods that resolve this problem --- the Kress quadrature and the Kapur-Rokhlin quadrature --- that are closely related to the new quadrature that we will describe.

Kress \cite{kress1991boundary} introduced a quadrature that is spectrally accurate for any periodic function $g$ of the form
\begin{equation}
g(x) = \varphi(x)\log\left(4\sin^2\frac{\pi x}{T}\right)+\psi(x)
\label{eq:kress_split}
\end{equation}
where $\varphi$ and $\psi$ are smooth functions with known formulae. Kress integrates the first term of \eqref{eq:kress_split} by Fourier analysis and the second term using the PTR, resulting in a corrected trapezoidal quadrature where all the PTR weights are modified. It is not obvious how a BIE scheme based on the Kress quadrature can be accelerated by the existing fast algorithms. We will use a ``localized'' analog of the analytic split \eqref{eq:kress_split} to develop our quadrature.

Kapur and Rokhlin \cite{kapur1997high}, on the other hand, constructed a family of quadratures for a variety of singular functions by correcting the trapezoidal rule locally near the singularity. These quadratures have correction weights that are essentially independent of the grid spacing $h$ and possess an essential benefit in that nearly all entries of the coefficient matrix are given by the simple formula \eqref{eq:niceK}; only a small number of entries near the diagonal are modified. This local nature of Kapur-Rokhlin quadrature makes it very easy to combine it with the FMM and other fast algorithms. For the logarithmic singularity, two different quadratures are developed. The first quadrature is for functions of the ``nonseparable'' form
\begin{equation}
g(x) = \varphi(x)\log|x|+\psi(x)
\label{eq:nonseparable}
\end{equation}
where the formulae of the smooth functions $\varphi$ and $\psi$ can be \emph{unknown}. This ``nonseparable'' quadrature ignores the data at $x=0$ completely and modifies a few trapezoidal weights on both sides of the singular point. The magnitude of the correction weights (tabulated in \cite[Table 6]{kapur1997high}) grow rapidly with the order of the correction, and moreover, some of the weights are negative. These properties mean that Kapur Rokhlin quadrature becomes less useful at higher orders (say order higher than 6), since the resulting coefficient matrix can be far worse conditioned than the underlying BIE \cite[Sec.~7.3]{hao2014high}. The second Kapur-Rokhlin quadrature is for functions of the ``separable'' form
\begin{equation}
g(x) = \varphi(x)\log|x|.
\label{eq:separable}
\end{equation}
Unlike the first rule, this ``separable'' quadrature also uses the data $\varphi(0)$ at the singular point for its correction; the correction weights (tabulated in \cite[Table 7]{kapur1997high}) are uniformly bounded regardless of the correction order, and decay rapidly away from the singular point.
Despite the excellent stability properties of the second Kapur-Rokhlin rule, it has received little attention due to the simple fact that the kernels arising from BIEs typically are of the nonseparable type \eqref{eq:nonseparable}. (In fact, the first rule is often referred to as ``the Kapur-Rokhlin'' rule.)

To avoid the issue of large correction weights, Alpert \cite{alpert1999hybrid} developed a hybrid Gauss-trapezoidal quadrature that uses an optimized set of correction points that are off the uniform trapezoidal grid, whose weights are very well-behaved. For additional details on high order accurate techniques for discretizing \eqref{eq:BIE}, as well as a discussion of their relative advantages, we refer to the survey \cite{hao2014high}.

\subsection{Contributions}

This paper describes a quadrature rule that is closely related to the neglected second Kapur-Rokhlin rule for separable functions. The new rule works almost exactly the same in practice in that it involves a small correction stencil that includes a correction weight at the origin. Both rules display excellent numerical stability and lead to discretized systems that are as well conditioned as the original equation. The key innovation is that the new rule that we present is applicable to functions of the form
\begin{equation}
g(x) = \varphi(x)\log|\rr(x)|+\psi(x)
\label{eq:loc_geom_split}
\end{equation}
where $\rr(x)$ is a smooth parametric curve in $\mathbb{R}^n$, and the smooth function $\varphi$ is known. Therefore, we have generalized the second Kapur-Rokhlin rule (which only works for the integrand \eqref{eq:separable} on $\mathbb{R}$) to new rules that can handle logarithmic kernels on curves in higher dimensions, making them applicable to a wide range of BIEs.

Our local kernel split \eqref{eq:loc_geom_split} is analogous to Kress' analytic split \eqref{eq:kress_split}; the main difference is that Kress has split down to the parameter level (with periodization) for the logarithmic component, while we only split locally down to the geometry level. We analyze the error of applying the punctured trapezoidal rule to the singular component of \eqref{eq:loc_geom_split} based on the lattice sum theory (see, e.g.~\cite{borwein2013lattice,wu2020corrected}). This results in an error expansion with coefficients that are explicitly computable using the Riemann zeta function, a fact we called the ``zeta connections.'' From these error coefficients, we construct local correction weights in the fashion of Kapur and Rokhlin. As it turns out, the correction weights constructed this way have a component that is the weights of the ``separable'' Kapur-Rokhlin quadrature mentioned above, while the remaining component depends on the explicit kernel split \eqref{eq:loc_geom_split}. (Remarkably, the zeta connection simplifies the construction of the separable Kapur-Rokhlin correction weights to the extent that Table 7 of \cite{kapur1997high} can be computed with three lines of code, as shown in Figure \ref{fig:code}.)

The zeta connection associated with the singular function $|x|^{-z}, -1<z<1$, was first introduced by Marin et.~al.~\cite{marin2014corrected}. In this paper, we extend this connection to a ``differential zeta connection'' (Theorem \ref{thm:zeta_prime_connection}) associated with the logarithmic singularity. We would also like to point out that the zeta connection has recently been generalized to higher dimensions by the authors in \cite{wu2020corrected}. Combining the ``differential zeta connection'' in this paper with the higher dimensional zeta connection of \cite{wu2020corrected}, we expect that a rigorous theory can be developed for higher-dimensional logarithmic quadratures such as the one developed by Aguilar and Chen \cite{aguilar2002high}. On the other hand, there is also the connection between the zeta function and the endpoint corrections of the trapezoidal rule for regular functions, which has been established much earlier, see \cite{navot1961extension,alpert1999hybrid}. 

We should mention that the zeta connections presented in this paper can be alternatively derived from the extended Euler-Maclaurin formulae of Navot \cite{navot1961extension,navot1962further}. In particular, the result of \cite{navot1962further} had been combined with Richardson extrapolation to construct high-order quadrature rules by Sidi and Israeli, see \cite{sidi1988quadrature}.

The technique presented in this manuscript for the case of contour integrals in two dimensions can be extended to higher dimensions as well, as is demonstrated in \cite{wu2020corrected} for the case of singular integral operators on surfaces in $\mathbb{R}^{3}$. The role played by the Riemann zeta function in the present paper must in higher dimensions instead be carried by the more general Epstein zeta function \cite{epstein1903theorie}.

\subsection{Organization}

In Section \ref{sc:zeta_connect}, we introduce the theory for the local correction of the trapezoidal quadrature and its connection to the zeta function. We extend this connection to construct a quadrature for the logarithmic singularity, which recovers the ``separable'' Kapur-Rokhlin quadrature but with much simpler computations. In Section \ref{sc:log_on_curve}, we generalize this Kapur-Rokhlin rule to logarithmic kernels on planar curves using a localized version of Kress' kernel split, developing quadratures for the Laplace and Helmholtz layer potentials. Finally in Section \ref{sc:numerical}, we present numerical examples of solving BIEs associated with the Helmholtz and Stokes equations and compare our quadrature method with existing state-of-the-art methods.

\section{Corrected trapezoidal rules and the zeta connections}
\label{sc:zeta_connect}

In this section, we introduce the theory for the local correction of the trapezoidal rule for singular functions. In particular, we introduce the ``zeta connections,'' based on which simple and powerful procedures are presented (see Remark \ref{rmk:complex_step}) for constructing quadratures for functions with algebraic or logarithmic branch-point singularities.

\subsection{Singularity correction by moment fitting}

To set up the notation for our discussion, we let $I[g]$ denote the integral of a function $g(x)$ on the interval $[-a,a]$, where $a>0$, and where $g$ may be singular at $x=0$. We denote the \emph{punctured trapezoidal rule} discretization of $I[g]$ as
\begin{equation}
T^0_h[g] = \sideset{}'\sum_{n=-M}^M g(x_n)h.
\label{eq:punctured}
\end{equation}
for some integer $M>0$, where $h=a/(M+\frac{1}{2})$ and $x_n=nh$, and where the prime on the summation sign indicates that $n=0$ is omitted. (Note that by our definition of $h$, the endpoints are not included as quadrature nodes, thus the usual $1/2$ factor for the weights at the endpoints is not needed. However, this choice is just for convenience and using the usual trapezoidal rule does not affect our analysis.) 

We are interested in singular integrals of the form
\begin{equation}
I[s\cdot \tau] = \int_{-a}^a s(x)\tau(x)\,\d x
\end{equation}
where $s(x)$ has an isolated, integrable singularity at $x=0$ and $\tau(x)$ is smooth. We assume that either the integrand $s(x)\tau(x)$ is periodic or that $\tau(x)$ is compactly supported in $[-a,a]$ so that the only obstruction to high-order convergence is the singularity of $s(x)$ at $x=0$. In general, boundary corrections to the trapezoidal rule can be introduced near $x=\pm a$ independent of the singularity at $0$; see \cite{alpert1995high}, for example, for more detail. 

To analyze the error of the approximation $I[s\cdot \tau]\approx T^0_h[s\cdot \tau]$, we decompose the integrand into a \emph{regular component} and a \emph{local component}:
\begin{equation}
s\cdot\tau = s\cdot \tau\cdot (1-\eta) + s\cdot \tau\cdot\eta
\end{equation}
where $\eta\in C^p_c([-a,a])$ is a smooth and compactly supported function that is at least $p$-time continuously differentiable and which satisfies $\eta(0)=1$ and $\eta(x) \equiv \eta(-x)$. For the regular component $s\cdot \tau\cdot (1-\eta)$, the trapezoidal discretization
\begin{equation}
I[s\cdot \tau\cdot(1-\eta)] = T^0_h[s\cdot \tau\cdot(1-\eta)] + O(h^p)
\label{eq:smooth_trapz}
\end{equation}
holds for any $p>0$ (note that the integrand is $0$ at $x=0$). Thus the overall convergence of $T^0_h[s\cdot \tau]$ is restricted by the error in the local component $s\cdot \tau\cdot\eta$. 

Using the idea of moment fitting \cite{keast1979structure,kapur1997high}, this local error can be corrected by fitting a set of moment equations for monomials $\tau^{(k)}(x)=x^k$ up to a sufficiently high degree as follows
\begin{equation}
h^\alpha\sideset{}{''}\sum_{j =-K}^K w_j^h\,(jh)^k\,\eta(jh) = I[s\cdot \tau^{(k)}\cdot\eta] - T^0_h[s\cdot \tau^{(k)}\cdot\eta],\qquad 0\leq k\leq 2K
\label{eq:moment_eq}
\end{equation}
where $K\geq0$ is an integer and the factor $h^\alpha$ depends explicitly on the singularity at $x=0$, and where the double prime on the summation sign indicates that the $j=0$ term is multiplied by 2. (For convenience we let $0^0:=1$.) There are $2K+1$ equations in \eqref{eq:moment_eq} for the $2K+1$ unknowns $\{w_j^h\}$. Then combining (\ref{eq:smooth_trapz}--\ref{eq:moment_eq}) yields a locally corrected trapezoidal quadrature
\begin{equation}
I[s\cdot \tau] \approx T^0_h[s\cdot \tau] + h^\alpha\sideset{}{''}\sum_{j =-K}^K w_j^h\tau(jh)
\label{eq:corrected_trapz}
\end{equation}
which is high-order accurate as $h\to0^+$.

\subsection{The $|x|^{-z}$ singularity and converged correction weights}

The fact that the weights $\{w_j^h\}$ in \eqref{eq:corrected_trapz} depend on $h$ is inconvenient in practice. Fortunately, this can be remedied by what we called the ``zeta connection.'' When $s(x) = |x|^{-z}$, $-1<z<1$, Marin et.\ al.\ \cite{marin2014corrected} showed that by letting $h\to0$ only in the $\eta(jh)$ term in the moment equations \eqref{eq:moment_eq}, the right-hand side of \eqref{eq:moment_eq} in this limit can be represented as Riemann zeta function values, and the corresponding limiting weights $\{w_j\}$ are independent of $h$; more importantly, they proved that using these converged weights in the quadrature \eqref{eq:smooth_trapz} in place of $\{w_j^h\}$ does \emph{not} affect the order of accuracy. We summarize this result of \cite{marin2014corrected} in this section, which will serve as the foundation for our extensions to other quadratures.

We first introduce the important concept of ``converged correction weights.'' 

Substitute $s(x)=|x|^{-z}$ in \eqref{eq:moment_eq} and let $\alpha=1-z$, we have
\begin{equation}
\begin{aligned}
h^{1-z}\sideset{}{''}\sum_{j =-K}^K w_j^h\,(jh)^k\,\eta(jh) &= \int_{-a}^a |x|^{-z}x^k\eta(x)\,\d x - \sideset{}'\sum_{n=-M}^M |nh|^{-z}(nh)^k\eta(nh)\,h\\
&= h^{1-z+k}\left(\int_{-N-\frac{1}{2}}^{N+\frac{1}{2}} |x|^{-z}x^k\eta(xh)\,\d x - \sideset{}'\sum_{n=-M}^M |n|^{-z}n^k\eta(nh)\right)\\
&= h^{1-z+k}\left(\int_{-\infty}^{\infty} |x|^{-z}x^k\eta(xh)\,\d x - \sideset{}'\sum_{n=-\infty}^\infty|n|^{-z}n^k\eta(nh)\right)
\end{aligned}
\label{eq:derivation1}
\end{equation}
where the last equality holds because $\eta$ is compactly supported. Note that both $|x|^{-z}$ and $\eta(x)$ are even, so by requiring that 
$$w_j^h \equiv w_{-j}^h,\quad j = 0,\dots,K$$
both sides of \eqref{eq:derivation1} vanish for all $k$ odd. Using this symmetry and eliminating $h^{1-z+k}$ on both sides yield
\begin{equation}
\sum_{j =0}^K w_j^h\,j^{2k}\,\eta(jh) = \int_{0}^{\infty} |x|^{-z+2k}\eta(xh)\,\d x - \sum_{n=1}^\infty|n|^{-z+2k}\eta(nh),\qquad k = 0,\dots, K
\label{eq:moment_eq_sym}
\end{equation}
From here we define the \emph{converged correction weights} $w_j, j=0,\dots,K,$ to be
\begin{equation}
w_j:=\lim_{h\to0^+}w_j^h, 
\label{eq:converged_wei_def}
\end{equation}
such that $w_j^h$ are the solution of \eqref{eq:moment_eq_sym}. To further simplify the equations, we need the following theorem.
\begin{theorem} [Zeta connection]
\label{thm:zeta_connection}
For all $z\in\CC\setminus\{1\}$,
\begin{equation}
\lim_{h\to0^+}\left(\sum_{n=1}^\infty|n|^{-z}\eta(nh) - \int_{0}^{\infty} |x|^{-z}\eta(xh)\,\d x\right) = \zeta(z).
\label{eq:zeta_connection}
\end{equation}
Consequently, the converged weights $w_j$, as defined by \eqref{eq:converged_wei_def}, are the solution of the system
\begin{equation}
\sum_{j =0}^K w_j\,j^{2k} = -\zeta(z-2k),\qquad k=0,1,\dots,K
\label{eq:moment_eq_zeta}
\end{equation}
\end{theorem}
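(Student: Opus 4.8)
The plan is to recognise the bracketed quantity in \eqref{eq:zeta_connection} as a disguised Euler--Maclaurin representation of the analytically continued Riemann zeta function. It suffices to treat $\mathrm{Re}(z)<1$, the only regime in which the integral converges classically and the only one actually needed in \eqref{eq:moment_eq_zeta}; for $\mathrm{Re}(z)\ge1$ one reads $\int_0^\infty|x|^{-z}\eta(xh)\,\d x$ as the (regularised, i.e.\ finite-part) value $h^{z-1}\int_0^\infty|u|^{-z}\eta(u)\,\d u$ of the continued Mellin transform, and the identity then follows by analytic continuation in $z$. Because $\eta$ is compactly supported, the sum has only finitely many nonzero terms and the integrand is supported in a bounded (if $h$-dependent) set, so the behaviour of $x^{-z}$ near $x=0$ is the only obstruction; Euler--Maclaurin cannot be applied across that point, so I would split at $x=1$ and write, with $f_h(x):=x^{-z}\eta(xh)$ (which is $C^p$ and compactly supported on $[1,\infty)$),
\[
D(h):=\sum_{n\ge1}n^{-z}\eta(nh)-\int_0^\infty x^{-z}\eta(xh)\,\d x=\Bigl(\sum_{n\ge1}f_h(n)-\int_1^\infty f_h(x)\,\d x\Bigr)-\int_0^1 f_h(x)\,\d x.
\]

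To the parenthesised difference I would apply the Euler--Maclaurin formula of even order $2K$, with $2K\le p$ and $2K>1-\mathrm{Re}(z)$; since $f_h$ and all its derivatives vanish for large $x$, this difference equals $\tfrac12 f_h(1)-\sum_{k=1}^K\tfrac{B_{2k}}{(2k)!}f_h^{(2k-1)}(1)+R_K(h)$, with $R_K(h)$ the usual Euler--Maclaurin remainder and $|R_K(h)|\le C_K\int_1^\infty|f_h^{(2K)}(x)|\,\d x$. Then I would let $h\to0^+$ term by term. Dominated convergence, with majorant $\|\eta\|_\infty x^{-\mathrm{Re}(z)}\in L^1(0,1)$, gives $\int_0^1 f_h\to\tfrac1{1-z}$. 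In the Leibniz expansion of $f_h^{(j)}(1)$ every term in which a derivative lands on $\eta$ carries a positive power of $h$ and drops out, so $f_h^{(j)}(1)\to\bigl(x^{-z}\bigr)^{(j)}\big|_{x=1}$ for each $j$. Expanding $f_h^{(2K)}$ by Leibniz and rescaling on the support of $\eta$, every piece of $R_K(h)$ vanishes as $h\to0^+$ (at a positive power of $h$, modulo a possible logarithm, precisely because $2K>1-\mathrm{Re}(z)$) except the single term carrying all $2K$ derivatives on $x^{-z}$, which converges by dominated convergence; hence $R_K(h)\to R_K^\infty$. Collecting the limits, $\lim_{h\to0^+}D(h)=\tfrac12-\sum_{k=1}^K\tfrac{B_{2k}}{(2k)!}\bigl(x^{-z}\bigr)^{(2k-1)}\big|_{x=1}+R_K^\infty-\tfrac1{1-z}$, which is exactly the value produced by the same Euler--Maclaurin formula applied to $\zeta(z)=\sum_{n\ge1}n^{-z}$ for $\mathrm{Re}(z)>1$ (with the identical remainder $R_K^\infty$). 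As a function of $z$ that expression is analytic on $\{\mathrm{Re}(z)>1-2K\}\setminus\{1\}$ and agrees with $\zeta$ for $\mathrm{Re}(z)>1$, hence equals $\zeta(z)$ throughout; letting $K$ grow (taking $\eta$ as smooth as needed) covers every $z$ with $\mathrm{Re}(z)<1$, proving \eqref{eq:zeta_connection}.

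For the ``Consequently'' I would substitute \eqref{eq:zeta_connection}, with $z$ replaced by $z-2k$, into the symmetric moment system \eqref{eq:moment_eq_sym}: its right-hand side equals $-\bigl(\sum_{n\ge1}n^{-(z-2k)}\eta(nh)-\int_0^\infty x^{-(z-2k)}\eta(xh)\,\d x\bigr)\to-\zeta(z-2k)$. Its left-hand side is a linear system $A(h)\mathbf w^h=\mathbf b(h)$ with $A(h)_{kj}=j^{2k}\eta(jh)$; as $h\to0^+$, $A(h)\to A(0)$ with $A(0)_{kj}=j^{2k}$, a Vandermonde matrix in the distinct nodes $0,1,4,\dots,K^2$ and hence invertible, while $\mathbf b(h)\to\mathbf b(0)$ with $\mathbf b(0)_k=-\zeta(z-2k)$. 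Therefore $\mathbf w^h=A(h)^{-1}\mathbf b(h)$ converges, the limit $\mathbf w=(w_0,\dots,w_K)$ exists (which also justifies the definition \eqref{eq:converged_wei_def}), and it solves $A(0)\mathbf w=\mathbf b(0)$, that is, \eqref{eq:moment_eq_zeta}.

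The main obstacle is the $h\to0^+$ limit of the Euler--Maclaurin remainder $R_K(h)$: one must verify that the ``boundary-layer'' contributions near $x=0$ hidden inside $f_h^{(2K)}$ --- the Leibniz terms in which $\eta(xh)$ is differentiated --- really do vanish, which is exactly what forces both $2K>1-\mathrm{Re}(z)$ and the smoothness $\eta\in C^{2K}$, so that for a fixed $z$ the identity requires $\eta$ sufficiently smooth. A more streamlined variant avoids this remainder analysis: writing $\widetilde\eta(w):=\int_0^\infty\eta(u)u^{w-1}\,\d u$ for the meromorphically continued Mellin transform of $\eta$, one has $\sum_{n\ge1}n^{-z}\eta(nh)=\tfrac1{2\pi i}\int_{(c)}\widetilde\eta(w)\,h^{-w}\zeta(z+w)\,\d w$ for $c>\max(0,1-\mathrm{Re}(z))$, and shifting the contour leftward past the pole of $\zeta(z+w)$ at $w=1-z$ (whose residue contributes exactly $\widetilde\eta(1-z)h^{z-1}=\int_0^\infty x^{-z}\eta(xh)\,\d x$) and the pole of $\widetilde\eta(w)$ at $w=0$ (residue $\eta(0)=1$, contributing $\zeta(z)$) leaves a remainder that vanishes as $h\to0^+$; subtracting the integral then gives \eqref{eq:zeta_connection} directly, and makes transparent why the limit is $\zeta(z)$ regardless of the choice of $\eta$.
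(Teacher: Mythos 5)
Your proposal is correct, but it follows a genuinely different route from the paper: the paper's own proof is a two-line delegation, citing \cite[Lemma A2]{marin2014corrected} for the identity \eqref{eq:zeta_connection} and then letting $h\to0^+$ in \eqref{eq:moment_eq_sym} (mentioning only parenthetically that the identity could instead be derived from the extended Euler--Maclaurin formula of \cite{navot1961extension}). What you have done is carry out that alternative route in full and self-contained form: splitting at $x=1$, applying Euler--Maclaurin of order $2K$ to $f_h(x)=x^{-z}\eta(xh)$ on $[1,\infty)$, passing to the limit term by term (the Leibniz terms where derivatives land on $\eta$ die off like powers of $h$ precisely because $2K>1-\mathrm{Re}(z)$), and recognizing the resulting expression as the standard Euler--Maclaurin analytic continuation of $\zeta$, analytic for $\mathrm{Re}(z)>1-2K$, $z\neq1$. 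Your handling of the ``Consequently'' part is the same in spirit as the paper's but slightly stronger: by noting that the limiting coefficient matrix is the Vandermonde matrix in the distinct nodes $0,1,4,\dots,K^2$, you actually establish that the limits $w_j=\lim_{h\to0^+}w_j^h$ exist and solve \eqref{eq:moment_eq_zeta}, whereas the paper simply states that taking $h\to0^+$ in \eqref{eq:moment_eq_sym} and using $\eta(0)=1$ gives \eqref{eq:moment_eq_zeta}. The trade-offs are clear: the paper's proof is short and inherits the precise hypotheses and uniform error estimates of \cite{marin2014corrected} (which are needed later, in Theorem \ref{thm:quad_|x|^-z}), while yours is self-contained and makes explicit what is hidden behind ``for all $z\in\CC\setminus\{1\}$'' --- the integral converges classically only for $\mathrm{Re}(z)<1$ (beyond that one must read it as the continued Mellin value, as you do), and for fixed $z$ one needs $\eta\in C^{2K}_c$ with $2K>1-\mathrm{Re}(z)$; both caveats are harmless here since only $z-2k$ with $-1<z<1$ ever enters \eqref{eq:moment_eq_zeta} and $\eta$ may be taken as smooth as desired. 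Your Mellin-transform sketch at the end is also sound (granted enough smoothness of $\eta$ to justify the contour shift) and arguably gives the most transparent explanation of why the limit is $\zeta(z)$ independently of the choice of $\eta$.
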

\begin{proof}
The zeta connection \eqref{eq:zeta_connection} is proved in \cite[Lemma A2]{marin2014corrected}. Then taking $h\to0^+$ in \eqref{eq:moment_eq_sym} yields \eqref{eq:moment_eq_zeta}. Note that we used the fact that $\eta(0)=1$. (Alternatively, \eqref{eq:zeta_connection} can also be derived from the extended Euler-Maclaurin formula of \cite{navot1961extension}.)
\end{proof}
Based on the zeta connection, the next theorem constructs a high-order corrected trapezoidal rule using converged correction weights.
\begin{theorem}
\label{thm:quad_|x|^-z}
For $s(x) = |x|^{-z}$, one has the locally corrected trapezoidal rule
\begin{equation}
I[s\cdot\tau\cdot\eta] = T^0_h[s\cdot\tau\cdot\eta] + h^{1-z}\sum_{j = 0}^{K}w_j\big(\tau(jh)+\tau(-jh)\big) + O(h^{2K+3-z}),
\label{eq:quad_|x|^-z}
\end{equation}
where the correction weights $\{w_j\}$ are the solution of \eqref{eq:moment_eq_zeta}, and where $\eta\in C^p_c([-a,a])$, $p>2K+1,$ must have at least $2K+1$ vanishing derivatives at $0$, i.e.\
\begin{equation}
\eta(0) = 1,\quad \eta^{(k)}(0)=0,\quad k=1,2,\dots,2K+1.
\end{equation}
\end{theorem}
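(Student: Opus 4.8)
The plan is to reduce the claimed error estimate to the moment-fitting identity \eqref{eq:moment_eq_sym} together with the zeta connection (Theorem \ref{thm:zeta_connection}), and then control the difference between the $h$-dependent weights $w_j^h$ and the converged weights $w_j$. First I would write, for a general smooth $\tau$ with the Taylor expansion $\tau(x) = \sum_{k=0}^{2K+1} \tau^{(k)}(0) x^k/k! + R(x)$, where the remainder $R$ together with its first $2K+1$ derivatives vanishes at $0$ and $R\in C^{2K+2}$ near $0$. Applying the punctured trapezoidal rule to $s\cdot R\cdot\eta$: since $R\cdot\eta$ inherits $2K+1$ vanishing derivatives at $0$ and $\eta$ carries $2K+1$ more (by the hypothesis on $\eta$), the product $s\cdot R\cdot\eta = |x|^{-z}R(x)\eta(x)$ is a $C^{2K+2-\lceil\mathrm{Re}\,z\rceil}$-type function vanishing to high order — more carefully, one should argue directly via the Euler--Maclaurin / Navot expansion that $I[s\cdot R\cdot\eta] - T^0_h[s\cdot R\cdot\eta] = O(h^{2K+3-z})$, exploiting that the leading Euler--Maclaurin corrections involve derivatives of $|x|^{-z}R(x)$ at the puncture, which vanish up to the required order.

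For the polynomial part, linearity lets me treat each monomial $\tau^{(k)}(x) = x^k$ separately. By the evenness argument already used to derive \eqref{eq:moment_eq_sym}, only even $k=2\ell$ contribute, and for those the exact moment equation \eqref{eq:moment_eq_sym} gives
\begin{equation}
I[s\cdot x^{2\ell}\cdot\eta] - T^0_h[s\cdot x^{2\ell}\cdot\eta] = h^{1-z+2\ell}\sum_{j=0}^K w_j^h\, j^{2\ell}\,\eta(jh),\qquad \ell = 0,\dots,K.
\label{eq:proposal_monomial}
\end{equation}
So summing the monomial contributions against the Taylor coefficients of $\tau$ reproduces exactly $h^{1-z}\sum_{j=0}^K w_j^h\big(\tau(jh)+\tau(-jh)\big)$ up to the Taylor-remainder term already handled — wait, more precisely it reproduces $h^{1-z}\sum_j w_j^h (\text{Taylor polynomial of }\tau\text{ at }jh)$, and the gap between that and $h^{1-z}\sum_j w_j^h(\tau(jh)+\tau(-jh))$ is again of order $h^{1-z}\cdot h^{2K+2} = h^{2K+3-z}$ since each $jh = O(h)$ and $w_j^h$ stays bounded as $h\to0$ (it converges to $w_j$).

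The main obstacle — and the step deserving the most care — is replacing $w_j^h$ by the converged $w_j$ without degrading the order: I must show $h^{1-z}\sum_j (w_j^h - w_j)(\tau(jh)+\tau(-jh)) = O(h^{2K+3-z})$. The idea is that the right-hand side of \eqref{eq:moment_eq_sym} differs from its $h\to0$ limit ($-\zeta(z-2k)$, via Theorem \ref{thm:zeta_connection} and $\eta(0)=1$) by a quantity that is itself $O(h^{2K+2})$ — this is precisely where the $2K+1$ vanishing derivatives of $\eta$ at $0$ are used: the error in the zeta connection limit, i.e.\ the Euler--Maclaurin tail of $\sum_n |n|^{-z+2k}\eta(nh) - \int_0^\infty |x|^{-z+2k}\eta(xh)\,dx$, is governed by derivatives of $\eta$ at $0$ and so is $O(h^{2K+2})$. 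Since the Vandermonde-type matrix $(j^{2k})_{k,j=0}^K$ is invertible with an $h$-independent inverse, $w_j^h - w_j = O(h^{2K+2})$ as well, and then $h^{1-z}\sum_j(w_j^h-w_j)\cdot O(1) = O(h^{2K+3-z})$ as needed. Assembling the regular-remainder estimate, the exact monomial identity \eqref{eq:proposal_monomial}, the Taylor-node correction, and this weight-perturbation bound yields \eqref{eq:quad_|x|^-z}. I would also remark that one may instead invoke Navot's extended Euler--Maclaurin formula \cite{navot1961extension} directly on $s\cdot\tau\cdot\eta$ to obtain the full asymptotic expansion and read off both the leading correction term and the $O(h^{2K+3-z})$ remainder in one stroke; the moment-fitting route above is essentially a bookkeeping of that same expansion.
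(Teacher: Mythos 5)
Your argument is, in substance, the proof that the paper delegates to \cite{marin2014corrected} (Theorem 3.7 and Lemma 3.8 there), and it mirrors the paper's own spelled-out proof of Theorem \ref{thm:quad_log|x|} for the logarithmic case: moment fitting on the even monomials, a Navot/Euler--Maclaurin bound of $O(h^{2K+3-z})$ on the Taylor remainder (using that $R\cdot\eta$ vanishes to order $2K+2$ at the puncture), and an $O(h^{2K+2})$ perturbation bound for $w_j^h-w_j$ obtained from the $2K+1$ vanishing derivatives of $\eta$ together with the $h$-independent invertibility of the Vandermonde matrix. The only slip is a factor of $2$ in your displayed monomial identity --- the double-primed sum over $j=-K,\dots,K$ in \eqref{eq:moment_eq} reduces to $2\sum_{j=0}^{K}w_j^h\,j^{2\ell}\,\eta(jh)$ for even moments, matching \eqref{eq:moment_eq_sym} --- which is harmless since your final assembly pairs $\tau(jh)+\tau(-jh)$; the two analytic inputs you defer to the Euler--Maclaurin/Navot expansion (remainder control and the convergence rate of the zeta-connection limit) are exactly the content of Theorem 3.1 and Lemma 3.3 of the cited reference rather than something you need to invent.
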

\begin{proof}
See Theorem 3.7 and Lemma 3.8 of \cite{marin2014corrected}.
\end{proof}

\subsection{Logarithmic singularity and the differential zeta connection}

One can also construct a quadrature for the logarithmic singularity by extending the zeta connection \eqref{eq:zeta_connection} based on the following simple observation:
$$
\frac{\d}{\d z}|x|^{-z} = -|x|^{-z}\log|x|.
$$
This leads to the next two theorems that are completely analogous to Theorems \ref{thm:zeta_connection} and \ref{thm:quad_|x|^-z}.
\begin{theorem} [Differential zeta connection]
\label{thm:zeta_prime_connection}
For all $z\in\CC\setminus\{1\}$,
\begin{equation}
\lim_{h\to0^+}\left(\sum_{n=1}^\infty-|n|^{-z}\log|n|\,\eta(nh) - \int_{0}^{\infty} -|x|^{-z}\log|x|\eta(xh)\,\d x\right) = \zeta'(z).
\label{eq:zeta_prime_connection}
\end{equation}
Consequently, if we define the converged weights $w_j:=\lim_{h\to0^+}w_j^h, j=0,\ldots,K,$ such that $w_j^h$ are the solution of the system
\begin{equation}
\sum_{j =0}^K w_j^h\,j^{2k}\,\eta(jh) = \int_{0}^{\infty} -|x|^{2k}\log|x|\,\eta(xh)\,\d x - \sum_{n=1}^\infty-|n|^{2k}\log|n|\,\eta(nh),\quad k = 0,\dots, K
\label{eq:moment_eq_sym_log}
\end{equation}
then $\{w_j\}$ are the solution of the system
\begin{equation}
\sum_{j =0}^K w_j\,j^{2k} = -\zeta'(-2k),\qquad k=0,1,\dots,K
\label{eq:moment_eq_zeta_prime}
\end{equation}
\end{theorem}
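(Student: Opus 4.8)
The plan is to derive the differential zeta connection \eqref{eq:zeta_prime_connection} from the already-established zeta connection \eqref{eq:zeta_connection} by differentiating in the parameter $z$, using the elementary identity $\partial_z|x|^{-z}=-|x|^{-z}\log|x|$, and then to obtain \eqref{eq:moment_eq_zeta_prime} by passing to the limit $h\to0^+$ in the moment system \eqref{eq:moment_eq_sym_log}, exactly as in the proof of Theorem~\ref{thm:zeta_connection}.

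For the first assertion I would fix $h>0$ and set
$$
G(z,h):=\sum_{n=1}^{\infty}|n|^{-z}\eta(nh)-\int_{0}^{\infty}|x|^{-z}\eta(xh)\,\d x .
$$
Since $\eta$ is compactly supported the sum has only finitely many nonzero terms and is entire in $z$, while the integral is over a compact set and is holomorphic in $z$ (for $\mathrm{Re}\,z\ge1$ one uses its analytic continuation, obtained by subtracting the first few Taylor terms of $\eta(xh)$ at $x=0$); thus $z\mapsto G(z,h)$ is holomorphic on $\CC\setminus\{1\}$. Differentiating under the finite sum and under the (convergent) integral gives
$$
\partial_z G(z,h)=\sum_{n=1}^{\infty}\bigl(-|n|^{-z}\log|n|\bigr)\eta(nh)-\int_{0}^{\infty}\bigl(-|x|^{-z}\log|x|\bigr)\eta(xh)\,\d x ,
$$
which is precisely the expression inside the limit in \eqref{eq:zeta_prime_connection}. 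By Theorem~\ref{thm:zeta_connection}, $G(\cdot,h)\to\zeta$ pointwise on $\CC\setminus\{1\}$ as $h\to0^+$. If this convergence is \emph{locally uniform} in $z$, then the Weierstrass convergence theorem --- or equivalently the one-line Cauchy-integral estimate $\partial_z G(z,h)-\zeta'(z)=\frac{1}{2\pi i}\oint\frac{G(w,h)-\zeta(w)}{(w-z)^{2}}\,\d w$ --- yields $\partial_z G(\cdot,h)\to\zeta'$ locally uniformly, which is \eqref{eq:zeta_prime_connection}.

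The main obstacle is therefore upgrading the pointwise convergence in Theorem~\ref{thm:zeta_connection} to convergence that is uniform for $z$ in compact subsets of $\CC\setminus\{1\}$, since pointwise convergence of holomorphic functions does not by itself transfer to derivatives. I would do this by inspecting the proof of the zeta connection in \cite[Lemma~A2]{marin2014corrected}: that argument is at bottom an Euler--Maclaurin expansion of $x\mapsto|x|^{-z}\eta(xh)$, and the remainder it produces is controlled by integrals of $\partial_x^{\,p}\bigl(|x|^{-z}\eta(xh)\bigr)$ together with finitely many boundary derivatives, each of which depends continuously --- hence boundedly on compact sets --- on $z\in\CC\setminus\{1\}$. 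This yields a bound on $|G(z,h)-\zeta(z)|$ uniform for $z$ in any fixed compact subset of $\CC\setminus\{1\}$, which is all that is needed. (Alternatively, one can derive \eqref{eq:zeta_prime_connection} directly from Navot's extended Euler--Maclaurin formula with a logarithmic weight \cite{navot1962further}, bypassing the differentiation step entirely; either route should be a short computation once the $z$-uniform error control is in hand.)

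For the second assertion I would first record that the limit $w_j:=\lim_{h\to0^+}w_j^h$ exists: writing $u_j^h:=w_j^h\,\eta(jh)$, the system \eqref{eq:moment_eq_sym_log} reads $\sum_{j=0}^{K}u_j^h\,j^{2k}=\mathrm{RHS}_k(h)$ with the fixed generalized Vandermonde matrix $(j^{2k})_{0\le j,k\le K}$, which is nonsingular since the nodes $0^{2},1^{2},\dots,K^{2}$ are distinct, and with a right-hand side that converges as $h\to0^+$; hence each $u_j^h$ converges, and since $\eta(jh)\to\eta(0)=1$ so does $w_j^h$. Then I would let $h\to0^+$ in the $k$-th equation of \eqref{eq:moment_eq_sym_log}: the left side tends to $\sum_{j=0}^{K}w_j\,j^{2k}$, while the right side is the bracket of \eqref{eq:zeta_prime_connection} with $z$ replaced by $-2k$ and with the roles of sum and integral interchanged, hence tends to $-\zeta'(-2k)$ (finite, since $-2k\ne1$) by the differential zeta connection just proved. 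This gives $\sum_{j=0}^{K}w_j\,j^{2k}=-\zeta'(-2k)$ for $k=0,1,\dots,K$, i.e.\ \eqref{eq:moment_eq_zeta_prime}, and the nonsingularity of $(j^{2k})$ shows these equations determine the converged weights $\{w_j\}$ uniquely.
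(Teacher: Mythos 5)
Your proposal is correct and follows essentially the same route as the paper: differentiate the zeta connection \eqref{eq:zeta_connection} with respect to $z$ and then pass to the limit $h\to0^+$ in the moment system \eqref{eq:moment_eq_sym_log}, using the nonsingularity of the generalized Vandermonde matrix. You are in fact more careful than the paper's one-line justification (``the expression under the limit sign is analytic in $z$''), since you correctly observe that pointwise convergence of holomorphic functions does not transfer to derivatives and that the locally uniform error control must be extracted from the Euler--Maclaurin-type remainder in \cite[Lemma A2]{marin2014corrected}.
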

\begin{proof}
Taking the derivative with respect to $z$ under the limit sign on both sides of \eqref{eq:zeta_connection} yields \eqref{eq:zeta_prime_connection}, which is justified since the expression under the limit sign is analytic in $z$. Then taking $h\to0^+$ in \eqref{eq:moment_eq_sym_log} yields \eqref{eq:moment_eq_zeta_prime}. (An alternative proof can be derived from the second extended Euler-Maclaurin formula of Navot \cite{navot1962further}.)
\end{proof}
\begin{theorem}
\label{thm:quad_log|x|}
For $s(x) = -\log|x|$, one has the locally corrected trapezoidal rule
\begin{equation}
I[s\cdot\tau\cdot\eta] = T^0_h[s\cdot\tau\cdot\eta] -\tau(0)h\log h+h\sum_{j = 0}^{K}w_j\big(\tau(jh)+\tau(-jh)\big) + O(h^{2K+2}),
\label{eq:quad_log|x|}
\end{equation}
where the correction weights $\{w_j\}$ are the solution of \eqref{eq:moment_eq_zeta_prime}, and where $\eta\in C^p_c([-a,a])$ satisfies the same conditions as in Theorem \ref{thm:quad_|x|^-z}.
\end{theorem}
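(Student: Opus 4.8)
The plan is to mirror the structure of the proof of Theorem~\ref{thm:quad_|x|^-z} (which handles $s(x) = |x|^{-z}$) and obtain the logarithmic case by differentiating in $z$ at $z=0$. Concretely, I would start from the $|x|^{-z}$ quadrature identity
\begin{equation}
I[|x|^{-z}\cdot\tau\cdot\eta] = T^0_h[|x|^{-z}\cdot\tau\cdot\eta] + h^{1-z}\sum_{j=0}^K w_j(z)\big(\tau(jh)+\tau(-jh)\big) + O(h^{2K+3-z}),
\label{eq:plan_z_identity}
\end{equation}
where I have made the $z$-dependence of the converged weights explicit: $w_j(z)$ solves \eqref{eq:moment_eq_zeta}. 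The key observation is $\frac{\d}{\d z}|x|^{-z} = -|x|^{-z}\log|x|$, so setting $z=0$ after differentiating the first two terms of \eqref{eq:plan_z_identity} produces exactly $I[s\cdot\tau\cdot\eta]$ and $T^0_h[s\cdot\tau\cdot\eta]$ for $s(x)=-\log|x|$. Differentiating the correction term, $\frac{\d}{\d z}\big(h^{1-z} w_j(z)\big) = h^{1-z}\big(w_j'(z) - w_j(z)\log h\big)$, and evaluating at $z=0$ gives $h\,w_j'(0) - h\log h\, w_j(0)$. Summing over $j$ and using that $\sum_{j=0}^K w_j(0)\cdot 2 = -2\zeta(0) = 1$ from \eqref{eq:moment_eq_zeta} at $k=0$ (recall $\zeta(0)=-1/2$, and the $j=0$ term picks up the doubling) accounts for the $-\tau(0)h\log h$ term in \eqref{eq:quad_log|x|}; and the $w_j'(0)$ contribution, by differentiating \eqref{eq:moment_eq_zeta} in $z$ at $z=0$, satisfies $\sum_{j=0}^K w_j'(0) j^{2k} = -\zeta'(-2k)$, which is precisely system \eqref{eq:moment_eq_zeta_prime}. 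Hence the differentiated correction weights are the $\{w_j\}$ of Theorem~\ref{thm:zeta_prime_connection}, as claimed.

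The main technical point — and the step I expect to require the most care — is justifying that one may differentiate the \emph{entire} identity \eqref{eq:plan_z_identity} with respect to $z$ term by term, including the remainder: that is, showing $\frac{\d}{\d z} O(h^{2K+3-z}) = O(h^{2K+2}\log h)$ (or at least $O(h^{2K+2})$ after absorbing logs) uniformly in a neighborhood of $z=0$. This needs an argument that the error term in Theorem~\ref{thm:quad_|x|^-z} is not merely $O(h^{2K+3-z})$ pointwise in $z$ but is analytic in $z$ with derivative bounds of the stated order. The cleanest route is to revisit the Euler--Maclaurin / lattice-sum expansion underlying \cite[Thm.~3.7, Lem.~3.8]{marin2014corrected} and observe that all error contributions there come from zeta values $\zeta(z-2k)$ for $k>K$ together with analytic tails, each of which is holomorphic in $z$ away from the pole at $z=1$; differentiating such an expansion in $z$ is legitimate on compact subsets avoiding $z=1$, and the factor $h^{2K+3-z}$ differentiates to $-h^{2K+3-z}\log h$, which at $z=0$ is $O(h^{2K+3}\log h) = O(h^{2K+2})$. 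An honest writeup would either cite an analyticity statement from \cite{marin2014corrected} or sketch this lattice-sum argument explicitly.

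The remaining steps are bookkeeping. First I would record that the map $z\mapsto w_j(z)$ is analytic near $z=0$: the linear system \eqref{eq:moment_eq_zeta} has a Vandermonde-type coefficient matrix (entries $j^{2k}$, independent of $z$) which is invertible, and the right-hand side $-\zeta(z-2k)$ is analytic in $z$ near $0$ since none of the arguments $z-2k$, $k=0,\dots,K$, equals $1$ there; hence $w_j(z) = \sum_k [A^{-1}]_{jk}(-\zeta(z-2k))$ is analytic, and $w_j'(0) = \sum_k [A^{-1}]_{jk}(-\zeta'(-2k))$, i.e. $\{w_j'(0)\}$ solves \eqref{eq:moment_eq_zeta_prime}. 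Then I would assemble: differentiate \eqref{eq:plan_z_identity} in $z$, set $z=0$, use $\frac{\d}{\d z}|x|^{-z}\big|_{z=0} = -\log|x|$ on the integral and PTR terms, use the product-rule computation above on the correction sum, invoke $\sum_{j} 2 w_j(0) \eta(\cdot) \to 1\cdot(-2\zeta(0))$ appropriately for the $h\log h$ coefficient (with the $j=0$ doubling matching the $\tau(0)$ normalization since $\tau(0h)+\tau(-0h)=2\tau(0)$ would double-count, so the $w_0$ term contributes $w_0(\tau(0)+\tau(0))$ — this normalization must be tracked against the double-prime convention in \eqref{eq:moment_eq_sym_log}), and collect the remainder as $O(h^{2K+2})$. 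The hypotheses on $\eta$ carry over verbatim from Theorem~\ref{thm:quad_|x|^-z} because differentiating in $z$ does not touch the $\eta$-dependence; the same $p>2K+1$ smoothness and $2K+1$ vanishing derivatives at $0$ suffice.
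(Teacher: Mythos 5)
Your proposal is correct in outline but takes a genuinely different route from the paper. The paper never differentiates the quadrature identity \eqref{eq:quad_|x|^-z} in $z$; instead it splits $-\log|x| = -\log|x/h| - \log h$ inside the local error \eqref{eq:derivation2}. The $-\log h$ piece multiplies the error of the plain trapezoidal rule on a smooth integrand, which equals $h\tau(0)+O(h^p)$ because only the punctured node contributes, and this produces the $-\tau(0)h\log h$ term directly; the scale-invariant $-\log|x/h|$ piece is handled by moment fitting, giving $h$-dependent weights $w_j^h$ solving \eqref{eq:moment_eq_sym_log}; the proof then closes with the estimate $|w_j-w_j^h|=O(h^{2K+1})$, delegated to (a logarithmic variant of) Theorem~3.1 and Lemma~3.3 of \cite{marin2014corrected}. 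Your route---differentiating all of \eqref{eq:quad_|x|^-z} at $z=0$---is algebraically consistent: $w_j'(0)$ does solve \eqref{eq:moment_eq_zeta_prime} by differentiating the Vandermonde system, and the coefficient of $-h\log h$ is $2\sum_j w_j(0)=-2\zeta(0)=1$ times $\tau(0)$. What your approach buys is conceptual economy (the logarithmic rule is literally the $z$-derivative of the algebraic one, the same observation driving Theorem~\ref{thm:zeta_prime_connection}); what it costs is that the remainder in Theorem~\ref{thm:quad_|x|^-z} must be upgraded from a pointwise bound at each real $z$ to an analytic family with a uniform bound on a complex neighborhood of $0$, so that a Cauchy estimate controls its $z$-derivative. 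That upgrade is not contained in Theorem~\ref{thm:quad_|x|^-z} as stated, and proving it is of essentially the same depth as the $|w_j-w_j^h|=O(h^{2K+1})$ estimate the paper borrows---so you have traded one external lemma for another, which is acceptable only if you actually carry out the lattice-sum analyticity argument you sketch rather than assert it.

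One bookkeeping point you should make explicit: after differentiation, the $-h\log h$ factor multiplies $\sum_{j}w_j(0)\bigl(\tau(jh)+\tau(-jh)\bigr)$, not $\tau(0)$. Replacing one by the other up to $O(h^{2K+2})$ requires every higher moment $\sum_j w_j(0)\,j^{2m}=-\zeta(-2m)$ to vanish for $1\le m\le K$, i.e.\ you are invoking the trivial zeros of $\zeta$. Without this the discrepancy would be $O(h^{3}\log h)$, which destroys the claimed order for $K\ge 1$; with it, the discrepancy is $O(h^{2K+3}\log h)$ and the stated $O(h^{2K+2})$ remainder survives.
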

\begin{proof}
The local error of the punctured trapezoidal rule for $s(x)=-\log|x|$ is
\begin{equation}
\begin{aligned}
I[s\cdot\tau\cdot\eta] - T^0_h[s\cdot\tau\cdot\eta] &= \int_{-a}^a -\log|x|\,\tau(x)\eta(x)\,\d x - \sideset{}'\sum_{n=-M}^M -\log|nh|\,\tau(nh)\eta(nh)\,h\\
&= \left(\int_{-a}^a -\log\left|\frac{x}{h}\right|\,\tau(x)\eta(x)\,\d x - \sideset{}'\sum_{n=-M}^M -\log|n|\,\tau(nh)\eta(nh)\,h\right)\\
&\quad -\log h\, \left(\int_{-a}^a \tau(x)\eta(x)\,\d x - \sideset{}'\sum_{n=-M}^M \tau(nh)\eta(nh)\,h\right)
\end{aligned}
\label{eq:derivation2}
\end{equation}
Notice that in the second parentheses, the integral is smooth so the regular trapezoidal rule converges super-algebraically, i.e.\ using the fact that $\eta(0)=1$ we have
\begin{equation}
\log h\, \left(\int_{-a}^a \tau(x)\eta(x)\,\d x - \sideset{}'\sum_{n=-M}^M \tau(nh)\eta(nh)\,h\right) = (h\log h)\,\tau(0)+O(h^p) \text{ as $h\to0^+$}
\label{eq:derivation2.2}
\end{equation}
holds for any $p>0$. On the other hand, using the idea of moment fitting and following a similar derivation as (\ref{eq:derivation1}--\ref{eq:moment_eq_sym}), the terms in the first parentheses of \eqref{eq:derivation2} are approximated by
\begin{equation}
h\sum_{j = 0}^{K}w_j^h\big(\tau(jh)+\tau(-jh)\big)+O(h^{2K+3})
\label{eq:derivation2.3}
\end{equation}
where $w_j^h$ are the solution of \eqref{eq:moment_eq_sym_log}. Then substituting (\ref{eq:derivation2.2}--\ref{eq:derivation2.3}) into \eqref{eq:derivation2} gives
$$
I[s\cdot\tau\cdot\eta] = T^0_h[s\cdot\tau\cdot\eta] -\tau(0)h\log h+h\sum_{j = 0}^{K}w_j^h\big(\tau(jh)+\tau(-jh)\big)+O(h^{2K+3}).
$$
The above equation implies \eqref{eq:quad_log|x|} once it is shown that
$$
|w_j - w_j^h| = O(h^{2K+1}),\quad j = 0,1,\dots,K,
$$
which in turn can be proved by showing that the limit \eqref{eq:zeta_prime_connection} converges as $O(h^{2K+1})$ uniformly for $0\leq z\leq 2K$ given the condition of $\eta$. This last statement can be proved following almost verbatim the proofs of Theorem 3.1 and Lemma 3.3 in \cite{marin2014corrected} by replacing $|x|^{-z}$ with $|x|^{-z}\log|x|$ therein, hence we omit the detail here.
\end{proof}

The logarithmic quadrature \eqref{eq:quad_log|x|} is equivalent to the ``separable'' Kapur-Rokhlin quadrature developed in \cite[\S 4.5.1]{kapur1997high}, hence the correction weights are identical (up to a minus sign) to those given in \cite[Table 7]{kapur1997high}; however, the differential zeta connection has greatly simplified the construction of these weights.

\begin{remark}
\label{rmk:complex_step}
In practice, the value $\zeta'(z)$, $z\in\RR$, can be approximated using ``complex step differentiation'' \cite{squire1998using} as
\begin{equation}
\zeta'(z)=\frac{\mathrm{Im}\,\zeta(z+i\delta)}{\delta} + O(\delta^2)
\end{equation}
where $i=\sqrt{-1}$, $0<\delta\ll1$. This formula is free of cancellation errors that plagued typical finite difference methods. For instance, using $\delta\approx10^{-9}$ will yield an approximation of full double-precision accuracy.

On the other hand, as mentioned in \cite{kapur1997high}, the Vandermonde system \eqref{eq:moment_eq_zeta_prime} is ill-conditioned for large $K$. Thus when precomputing the weights $\{w_j\}$, \eqref{eq:moment_eq_zeta_prime} should be solved symbolically or under extended precision. Simple code snippets that compute $\{w_j\}$ for any given $K$ are given in Figure \ref{fig:code}.
\end{remark}

\begin{figure}[htbp]
\raggedright
MATLAB:
\begin{Verbatim}[frame=single]
rhs = -imag(zeta(-vpa(0:2:2*K)'+1i*eps)/eps);
V = vpa(0:K).^((0:2:2*K)');
w = double(V\rhs);
\end{Verbatim}
Julia:
\begin{Verbatim}[frame=single]
using SpecialFunctions
rhs = -imag.(zeta.(-(0:2:2*K).+im*eps())./eps())
V = (BigFloat.(0:K))'.^(0:2:2*K)
w = V\rhs
\end{Verbatim}
Mathematica:
\begin{Verbatim}[frame=single]
Unprotect[Power]; Power[0,0]=1; Protect[Power]; (* set 0^0=1 *)
rhs = -Im[Zeta[-2(Range[K+1]-1)+I*$MachineEpsilon]/$MachineEpsilon];
V = Array[(#2-1)^(2(#1-1))&, {K+1,K+1}];
w = Inverse[V].rhs;
\end{Verbatim}
\caption{Code that given $K$ (such that the correction order is $2K+2$) constructs the correction weights for integration against $-\log|z|$, as described in Remark \ref{rmk:complex_step} (where $\delta$ is chosen to be the machine precision \texttt{eps} for simplicity). The code generates \cite[Table 7]{kapur1997high} up to a minus sign.}
\label{fig:code}
\end{figure}

\section{Logarithmic kernels on curves}
\label{sc:log_on_curve}

In this section, we extend the ``separable'' Kapur-Rokhlin rule \eqref{eq:quad_log|x|} to construct our ``zeta-corrected quadrature.'' We will combine the differential zeta connection with local kernel splits (that are analogous to Kress' global analytic split \eqref{eq:kress_split}) to construct quadratures for some important logarithmic kernels on closed curves, including the Laplace and Helmholtz layer potentials; the quadrature for the Laplace single-layer potential will also be applied to integrate the Stokes potential in Section \ref{sc:numerical}.

\subsection{Laplace kernel}
Consider a smooth closed curve $\Gamma$ parameterized by a $2a$-periodic function $\Brho(x)\in\RR^n$. We consider the Laplace single-layer potential (SLP) from $\Gamma$ to $\Brho(0)\in\Gamma$ (for the general case, one simply replace $\Brho(0)$ with any other target point on $\Gamma$),
\begin{equation}
S[\tau](\Brho(0)) := \int_{-a}^a \big(-\log r(x)\big)\tau(x)\,|\Brho'(x)|\,\d x
\label{eq:laplace_ker_curve}
\end{equation}
where $r(x) := |\Brho(0)-\Brho(x)|$ and $\tau(x)\equiv \tau(\Brho(x))$. The next theorem extends Theorem \ref{thm:quad_log|x|} to construct a corrected quadrature for \eqref{eq:laplace_ker_curve}.

\begin{theorem}
\label{thm:quad_on_curve_log|x|}
For the Laplace SLP \eqref{eq:laplace_ker_curve}, one has the locally corrected trapezoidal rule
\begin{equation}
S[\tau](\Brho(0)) = T^0_h[s\cdot\tilde{\tau}] -\tilde{\tau}(0)h\log(|\Brho'(0)|\,h)+h\sum_{j = 0}^{K}w_j\big(\tilde{\tau}(jh)+\tilde{\tau}(-jh)\big) + O(h^{2K+2}),
\label{eq:quad_on_curve_log|x|}
\end{equation}
where $s(x) = -\log r(x)$ and $\tilde{\tau}(x):=\tau(x)|\Brho'(x)|$ is smooth, and where the correction weights $w_j$ are exactly the same as in Theorem \ref{thm:quad_log|x|}. 
\end{theorem}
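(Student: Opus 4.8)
The plan is to reduce the statement to Theorem~\ref{thm:quad_log|x|} by means of a \emph{local kernel split} that peels the leading behavior off the chord length $r(x)=|\Brho(0)-\Brho(x)|$, in the spirit of Kress' analytic split~\eqref{eq:kress_split} but carried out only ``down to the geometry level.'' By Hadamard's lemma we may write $\Brho(0)-\Brho(x)=x\,\mathbf{g}(x)$ with $\mathbf{g}$ smooth and $2a$-periodic and $\mathbf{g}(0)=-\Brho'(0)$; hence $r(x)^2=x^2\,|\mathbf{g}(x)|^2$, where $|\mathbf{g}(x)|^2=\mathbf{g}(x)\cdot\mathbf{g}(x)$ is a smooth function that is strictly positive on a neighborhood $U$ of $0$ (since $|\mathbf{g}(0)|=|\Brho'(0)|>0$). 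Therefore $\psi(x):=\sqrt{|\mathbf{g}(x)|^2}/|\Brho'(0)|$ is smooth on $U$ with $\psi(0)=1$, and on $U$
\[
r(x)=|\Brho'(0)|\,|x|\,\psi(x),\qquad\text{so that}\qquad -\log r(x)=-\log|x|\;-\;\log|\Brho'(0)|\;-\;\log\psi(x),
\]
in which only the first term is singular and the other two are smooth.

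Next I would fix a cutoff $\eta$ meeting the hypotheses of Theorem~\ref{thm:quad_log|x|} (even, smooth, $\eta(0)=1$ with all derivatives at $0$ vanishing — e.g.\ a smooth bump equal to $1$ near $0$), shrunk so that $\mathrm{supp}\,\eta\subset U$ and $r(x)>0$ on $\mathrm{supp}(1-\eta)$; the latter is possible because the only zero of $r$ on $[-a,a]$ is $x=0$ ($\Gamma$ being a simple closed curve). Writing $s\,\tilde\tau = s\,\tilde\tau\,(1-\eta)+s\,\tilde\tau\,\eta$ and using linearity of $T^0_h$, the ``far'' piece $s\,\tilde\tau\,(1-\eta)$ is smooth and $2a$-periodic, so the punctured trapezoidal rule integrates it with error $O(h^p)$ for every $p$ (recall~\eqref{eq:smooth_trapz}).

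For the ``near'' piece I would insert the split (valid on $\mathrm{supp}\,\eta$):
\[
\int_{-a}^{a}(-\log r)\,\tilde\tau\,\eta\,\d x
=\int_{-a}^{a}(-\log|x|)\,\tilde\tau\,\eta\,\d x
+\int_{-a}^{a}(-\log|\Brho'(0)|)\,\tilde\tau\,\eta\,\d x
+\int_{-a}^{a}(-\log\psi)\,\tilde\tau\,\eta\,\d x .
\]
To the first integral apply Theorem~\ref{thm:quad_log|x|} verbatim with $\tilde\tau$ in the role of $\tau$, producing $T^0_h[(-\log|x|)\tilde\tau\eta]-\tilde\tau(0)\,h\log h+h\sum_{j=0}^K w_j(\tilde\tau(jh)+\tilde\tau(-jh))+O(h^{2K+2})$ with the very weights of~\eqref{eq:moment_eq_zeta_prime}. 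The other two integrands are smooth and compactly supported, so the punctured rule differs from each integral only by the omitted $n=0$ node value plus $O(h^p)$; that value is $(-\log|\Brho'(0)|)\,\tilde\tau(0)\,h$ for the second integrand and $(-\log\psi(0))\,\tilde\tau(0)\,h=0$ for the third (since $\psi(0)=1$). Thus the middle term contributes exactly an extra $-\tilde\tau(0)\,h\log|\Brho'(0)|$, and the third contributes nothing beyond its own punctured sum.

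Finally I would reassemble. By linearity of $T^0_h$ and the pointwise identity $-\log|x_n|-\log|\Brho'(0)|-\log\psi(x_n)=-\log r(x_n)$ (which holds at every node in $\mathrm{supp}\,\eta$ and is irrelevant where $\eta(x_n)=0$), the three ``near'' punctured sums combine into $T^0_h[(-\log r)\tilde\tau\eta]$, which together with the ``far'' sum $T^0_h[(-\log r)\tilde\tau(1-\eta)]$ gives $T^0_h[s\,\tilde\tau]$. Collecting the two logarithmic corrections yields $-\tilde\tau(0)\,h(\log h+\log|\Brho'(0)|)=-\tilde\tau(0)\,h\log(|\Brho'(0)|\,h)$, while the $w_j$-weighted sum survives unchanged and all smooth pieces contribute only $O(h^p)$; taking $p\ge 2K+3$ leaves error $O(h^{2K+2})$, which is~\eqref{eq:quad_on_curve_log|x|}. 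I expect the only genuinely delicate ingredient beyond Theorem~\ref{thm:quad_log|x|} to be the smoothness of $\psi$ near $0$ — equivalently, that $x\mapsto r(x)/|x|$ extends smoothly across the singularity — which rests on $|\mathbf{g}(x)|^2$ being a smooth, non-vanishing function there; the rest is linearity of the trapezoidal rule together with careful bookkeeping of the punctured node, parallel to the one-dimensional argument in the proof of Theorem~\ref{thm:quad_log|x|}.
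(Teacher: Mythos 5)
Your proposal is correct and follows essentially the same route as the paper: both split $-\log r(x)$ into $-\log|x|-\log|\Brho'(0)|$ plus a smooth remainder $-\log\bigl(r(x)/(|\Brho'(0)||x|)\bigr)$, apply Theorem~\ref{thm:quad_log|x|} to the $-\log|x|$ piece, and account for the omitted $n=0$ node of the smooth pieces to produce the $-\tilde\tau(0)h\log|\Brho'(0)|$ term (the paper establishes smoothness of the remainder by Taylor-expanding $r^2(x)-|\Brho'(0)x|^2$, whereas you invoke Hadamard's lemma --- a cosmetic difference). The only slip is your claim that $\mathbf{g}$ is $2a$-periodic (it is not, since $(\Brho(0)-\Brho(x))/x$ is a periodic function divided by $x$), but this is harmless because the split is only used on $\mathrm{supp}\,\eta$ near $x=0$.
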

Note that the only difference of \eqref{eq:quad_on_curve_log|x|} from \eqref{eq:quad_log|x|} is that $\log h$ is replaced with $\log(|\Brho'(0)|h)$ and $\tau$ is replaced with $\tilde{\tau}$ .
\begin{proof}
First we analyze the singularity of $\log r(x)$. Note that
\begin{equation}
\begin{aligned}
\log r(x) &= \frac{1}{2}\log\Big(|\Brho'(0)x|^2 + (r^2(x)-|\Brho'(0)x|^2)\Big) \\
&= \log(|\Brho'(0)x|) +\frac{1}{2}\log\left(1 + \frac{r^2(x)-|\Brho'(0)x|^2}{|\Brho'(0)x|^2}\right)
\end{aligned}
\label{eq:derivation3}
\end{equation}
which can be rewritten as
\begin{equation}
\frac{1}{2}\log\left(1 + \frac{r^2(x)-|\Brho'(0)x|^2}{|\Brho'(0)x|^2}\right) = \log \frac{r(x)}{|\Brho'(0)x|}
\label{eq:derivation3p}
\end{equation}
We will show that \eqref{eq:derivation3p} is smooth, thus the only singular term in \eqref{eq:derivation3} is $\log(|\Brho'(0)x|)$.
To this end, first expand $\Brho(x)$ as a Taylor-Maclaurin series
$$
\Brho(x) = \Brho(0) + \Brho'(0)x + \Brho''(0)\frac{x^2}{2}+O(x^3),
$$
then
$$
\begin{aligned}
r^2(x)-|\Brho'(0)x|^2 &= \left(\Brho'(0)x + \Brho''(0)\frac{x^2}{2}+O(x^2)\right)\cdot\left(\Brho'(0)x + \Brho''(0)\frac{x^2}{2}+O(x^2)\right)-|\Brho'(0)x|^2\\
&= (\Brho'(0)\cdot\Brho''(0))x^3 + O(x^4),
\end{aligned}
$$
therefore
$$
\frac{r^2(x)-|\Brho'(0)x|^2}{|\Brho'(0)x|^2} = \frac{\Brho'(0)\cdot\Brho''(0)}{|\Brho'(0)|^2}x + O(x^2)
$$
is smooth near $x=0$, which implies that \eqref{eq:derivation3p} is indeed smooth. Next, we use the decomposition
$$\log r(x) = \log \frac{r(x)}{|\Brho'(0)x|} + \log |\Brho'(0)| +  \log |x|,$$
to analyze the error of the punctured trapezoidal rule being applied to $s\cdot\tilde{\tau}\cdot\eta$, as follows
\begin{equation}
\begin{aligned}
&I[s\cdot\tilde{\tau}\cdot\eta] - T^0_h[s\cdot\tilde{\tau}\cdot\eta] \\
&= \int_{-a}^a \Big(-\log r(x)\Big)\,\tilde{\tau}(x)\eta(x)\,\d x - \sideset{}'\sum_{n=-M}^M \Big(-\log|nh|\Big)\,\tilde{\tau}(nh)\eta(nh)\,h\\
&= \left\{\int_{-a}^a \Bigg(-\log \frac{r(x)}{|\Brho'(0)x|}\Bigg)\tilde{\tau}(x)\eta(x)\,\d x - \sideset{}'\sum_{n=-M}^M \Bigg(-\log \frac{r(nh)}{|\Brho'(0)nh|}\Bigg)\tilde{\tau}(nh)\eta(nh)\,h\right\}\\
&\quad -\log|\Brho'(0)|\, \left\{\int_{-a}^a \tilde{\tau}(x)\eta(x)\,\d x - \sideset{}'\sum_{n=-M}^M \tilde{\tau}(nh)\eta(nh)\,h\right\}\\
&\quad + \left\{\int_{-a}^a -\log |x|\,\tilde{\tau}(x)\eta(x)\,\d x - \sideset{}'\sum_{n=-M}^M -\log |nh|\,\tilde{\tau}(nh)\eta(nh)\,h\right\}
\end{aligned}
\label{eq:derivation4}
\end{equation}
where, because \eqref{eq:derivation3p} is smooth, the terms in the first curly brackets of \eqref{eq:derivation4} happen to be the error of the regular trapezoidal rule applied to a smooth function (notice that the integrand is zero at $x=0$), which vanishes super-algebraically; the terms in the second curly brackets, analogous to \eqref{eq:derivation2.2}, converge to $-(h\,\log |\Brho'(0)|)\tilde{\tau}(0)$ super-algebraically; finally for the terms in the last curly brackets, one simply applies the quadrature \eqref{eq:quad_log|x|} of Theorem \ref{thm:quad_log|x|}, with $\tau$ replaced by $\tilde{\tau}$. Combining all these estimates, as well as \eqref{eq:smooth_trapz}, one concludes that \eqref{eq:derivation4} implies \eqref{eq:quad_on_curve_log|x|}.
\end{proof}

\subsection{Helmholtz kernels}
We now apply Theorem \ref{thm:quad_on_curve_log|x|} to construct formulae for the Helmholtz layer potentials. Consider the Helmholtz SLP $S_\kappa$ on a smooth closed curve $\Brho(x)$ evaluated at $\Brho(0)$,
\begin{equation}
S_\kappa[\tau](\Brho(0)) := \int_{-a}^a s_\kappa(r)\tau(x)\,|\Brho'(x)|\,\d x
\label{eq:helmholtz_slp_ker_curve}
\end{equation}
where $r\equiv r(x)=|\Brho(0)-\Brho(x)|$ and $\kappa\in\CC$ is the wavenumber, and where the kernel $s_\kappa$ has the form \cite[\S3.5]{colton2019inverse}
\begin{equation}
s_\kappa(r) := \frac{i}{4}H_0(\kappa r) = -\frac{1}{2\pi}\log(r)J_0(\kappa r) + \frac{c_\gamma}{2\pi} + \phi(r^2)
\label{eq:helmholtz_slp_ker}
\end{equation}
where $H_0$ and $J_0$ are, respectively, the Hankel and Bessel functions of the first kind of order $0$, where  $\phi(r^2)\equiv\phi(r(x)^2)$ is some smooth function of $x$ such that $\phi(0)=0$, and where $c_\gamma := \frac{\pi i}{2}-(\log\frac{\kappa}{2}+\gamma)$ such that $\gamma=0.5772\dots$ is Euler's constant. Analogous to Kress' analytic split \eqref{eq:kress_split}, we introduce the kernel split
$$
s_\kappa(r) = s(r)J_0(\kappa r)/(2\pi) + s_\kappa^{(1)}(r)
$$
where $s(r)=-\log r$, such that the component $s_\kappa^{(1)}(r):= s_\kappa(r)-s(r)J_0(\kappa r)/(2\pi)$ is smooth. Therefore we can split \eqref{eq:helmholtz_slp_ker_curve} as
\begin{equation}
    S_\kappa[\tau](\Brho(0)) \equiv I[s_\kappa\cdot \tilde{\tau}] = I[s\cdot\tilde{\tau}_{\kappa}^S] + I[s_\kappa^{(1)}\cdot\tilde{\tau}]
    \label{eq:helm_slp_split}
\end{equation}
where $\tilde{\tau}:=\tau\cdot|\Brho'|$ and 
\begin{equation}
\tilde{\tau}_{\kappa}^S(x):=J_0(\kappa\, r(x))\,\tau(x)\,|\Brho'(x)|/(2\pi)
\label{eq:derivation5}
\end{equation}
are smooth function. Notice that the singular integral $I[s\cdot\tilde{\tau}_{\kappa}^S]$ can be approximated by \eqref{eq:quad_on_curve_log|x|} with $\tilde\tau$ replaced by $\tilde{\tau}_{\kappa}^S$, giving
\begin{equation}
    I[s\cdot\tilde{\tau}_{\kappa}^S] = T^0_h[s\cdot\tilde{\tau}_\kappa^S] -\frac{h}{2\pi}\log(|\Brho'(0)|\,h)\tilde{\tau}(0)+h\sum_{j = 0}^{K}w_j\big(\tilde{\tau}_\kappa^S(jh)+\tilde{\tau}_\kappa^S(-jh)\big) + O(h^{2K+2})
    \label{eq:derivation6}
\end{equation}
where we have used the fact that $J_0(0)=1$. Then combining \eqref{eq:derivation6} with the PTR \eqref{eq:ptr} for $I[s_\kappa^{(1)}\cdot\tilde{\tau}]$ (which converges super-algebraically), we finally have
\begin{equation}
    S_\kappa[\tau](\Brho(0)) = T^0_h[s_\kappa\cdot\tilde{\tau}]+\frac{h}{2\pi}\Big(c_\gamma-\log(|\Brho'(0)|\,h)\Big)\tilde{\tau}(0)+h\sum_{j = 0}^{K}w_j\big(\tilde{\tau}_\kappa^S(jh)+\tilde{\tau}_\kappa^S(-jh)\big) + O(h^{2K+2})
\label{eq:quad_on_curve_H0}
\end{equation}
where, again, $\tilde{\tau}_\kappa^S$ is given by \eqref{eq:derivation5}.

Finally, we can also obtain the formulae for the Helmholtz double-layer potential (DLP), $D_\kappa$, and the normal derivative of the SLP, $D^*_\kappa$, using similar derivations. We will just state the formulae and omit the derivations. Using similar notations from \eqref{eq:helmholtz_slp_ker_curve} and $\tilde\tau\equiv\tau
\cdot|\Brho'|$, these layer potentials are given by
\begin{equation}
D_\kappa[\tau](\Brho(0)) := I[d_\kappa\cdot \tilde{\tau}]\quad\text{ and }\quad D^*_\kappa[\tau](\Brho(0)) := I[d^*_\kappa\cdot \tilde{\tau}]
\label{eq:helmholtz_dlp_ker_curve}
\end{equation}
where $d_\kappa(r) := \nn\cdot\nabla_{\boldsymbol{\rho}}s_\kappa(r)$ with $\nn\equiv\nn(x)$ being the unit outward normal at $\Brho(x)$, and where $d^*_\kappa = \nn_0\cdot\nabla_{\boldsymbol{\rho}_0}s_\kappa$ with $\nn_0:=\nn(0)$ and $\Brho_0:=\Brho(0)$. The corresponding corrected trapezoidal rules for $D_\kappa$ and $D^*_\kappa$ are, respectively,
\begin{align}
I[d_\kappa\cdot \tilde{\tau}] &= T^0_h[d_\kappa\cdot\tilde{\tau}]+h\,c_0\tilde{\tau}(0)+h\sum_{j = 1}^{K}w_j\big(\tilde{\tau}_\kappa^D(jh)+\tilde{\tau}_\kappa^D(-jh)\big) + O(h^{2K+2})
\label{eq:quad_on_curve_dnH0}\\
I[d^*_\kappa\cdot \tilde{\tau}] &= T^0_h[d^*_\kappa\cdot\tilde{\tau}]+h\,c_0\tilde{\tau}(0)+h\sum_{j = 1}^{K}w_j\big(\tilde{\tau}_\kappa^{D^*}(jh)+\tilde{\tau}_\kappa^{D^*}(-jh)\big) + O(h^{2K+2})
\label{eq:quad_on_curve_dn0H0}
\end{align}
where $c_0:=\frac{\Brho''(0)\cdot\nn(0)}{4\pi|\Brho'(0)|^2}$ the curvature at $\Brho_0$ scaled by $-\frac{1}{4\pi}$, and where
$$
\begin{aligned}
\tilde{\tau}_\kappa^D(x):=\kappa \, J_1(\kappa\,r(x))\frac{\rr(x)\cdot\nn(x)}{2\pi\,r(x)}\,\tilde{\tau}(x)\quad \text{ and }\quad
\tilde{\tau}_\kappa^{D^*}(x):=-\kappa \, J_1(\kappa\,r(x))\frac{\rr(x)\cdot\nn_0}{2\pi\,r(x)}\,\tilde{\tau}(x),
\end{aligned}
$$
with $J_1$ being the Bessel function of the first kind of order $1$. 

\section{Numerical experiments}
\label{sc:numerical}
In this section, we present numerical examples of solving BIEs associated with the Stokes and Helmholtz equations. In each case, we obtain a linear system of the form \eqref{eq:nystromlinsys}, where the matrix $\mathbf{K}$ is filled using a particular quadrature. Then the linear system is solved either directly by inverting the matrix or iteratively by GMRES. 

We compare our quadrature method with the three singular quadratures mentioned in the introduction: Kapur and Rokhlin's locally corrected trapezoidal quadrature \cite{kapur1997high}, Alpert's hybrid Gauss-trapezoidal quadrature \cite{alpert1999hybrid}, and Kress's spectral quadrature \cite[\S3.6]{colton2019inverse}. The correction weights for our quadrature are precomputed by solving the equations \eqref{eq:moment_eq_zeta_prime} and using the techniques described in Remark \ref{rmk:complex_step}. When implementing the Kapur-Rokhlin, Kress, and Alpert quadratures, we followed the survey \cite{hao2014high}.

\begin{figure}[htbp]
\centering
\begin{tabular}{ccc}
\textbf{(a)} & \textbf{(b)} &  \textbf{(c)} \\
\includegraphics[align=t,width=0.24\textwidth]{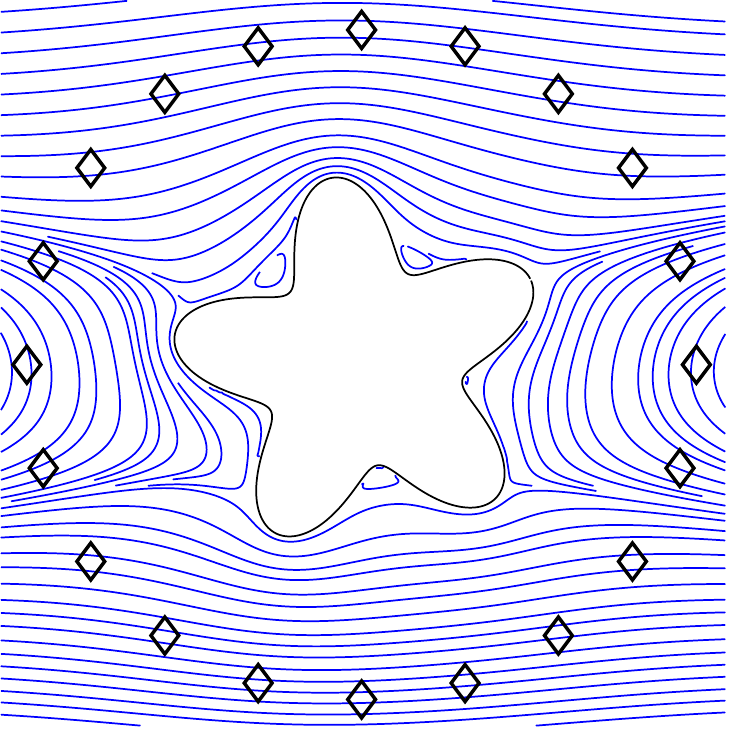} & \includegraphics[align=t,width=0.32\textwidth]{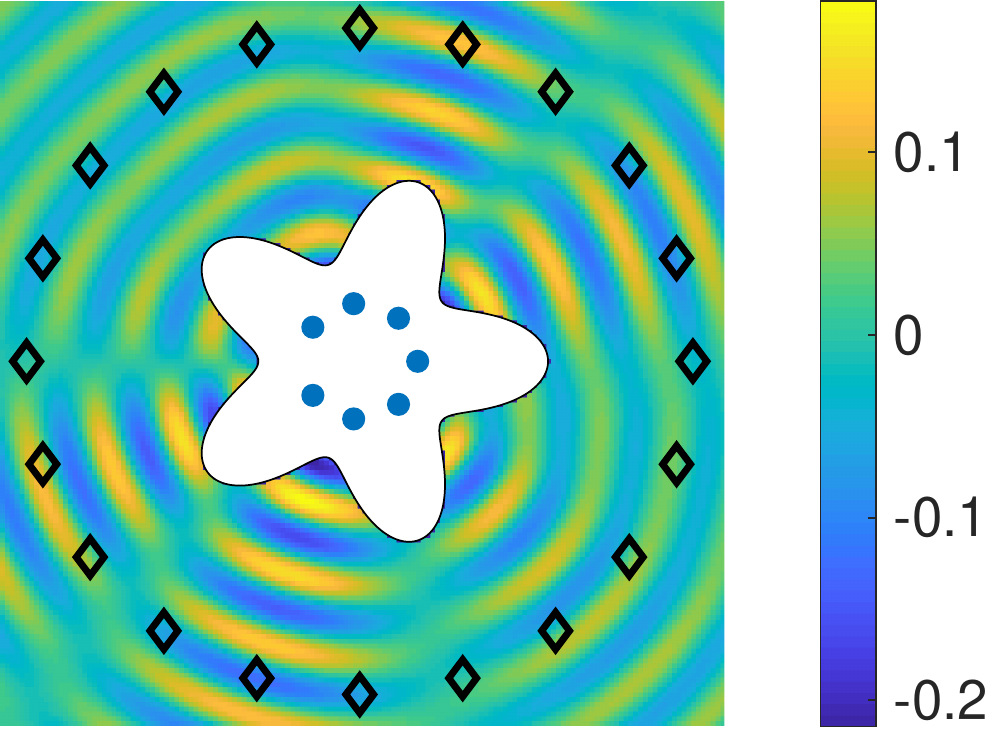} & \includegraphics[align=t,width=0.33\textwidth]{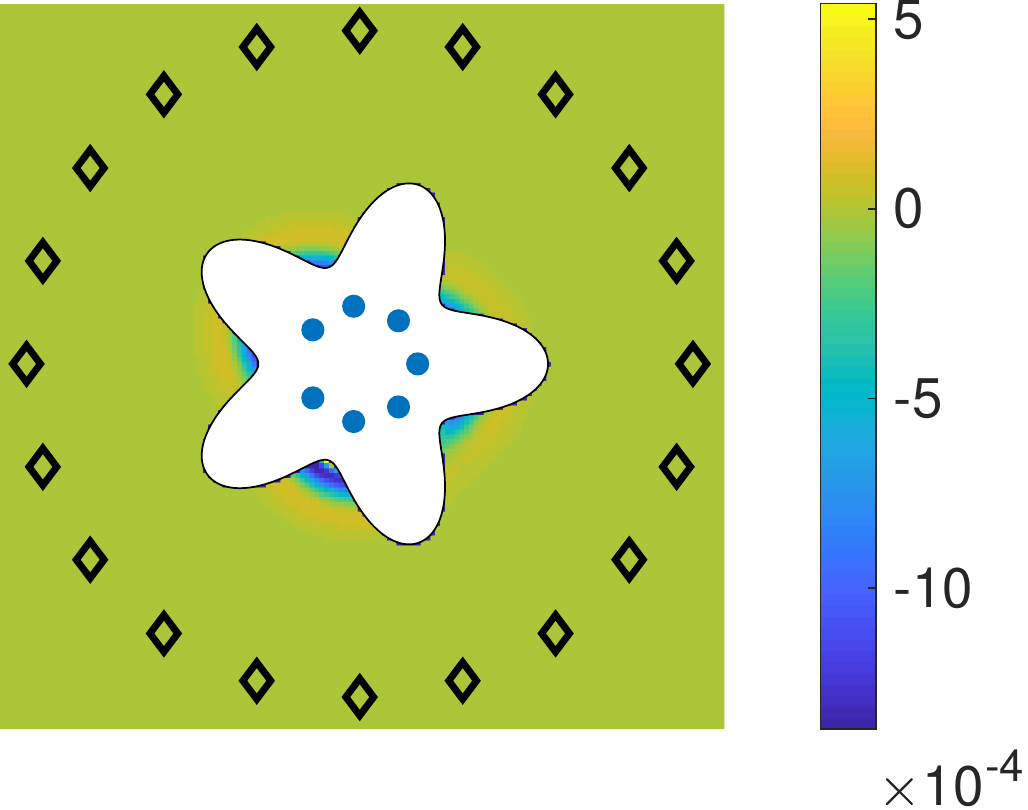}
\end{tabular}
\caption{Problem setup for the tests in Figures \ref{fig:quad_compare_stokes} and \ref{fig:quad_compare_helmholtz}. the Stokes problem \eqref{eq:stokes_bie} and the Helmholtz problem \eqref{eq:helmholtz_diri}. 
In all cases, the star-shaped geometry is parameterized by the polar function $p(\theta) = 1 + 0.3\cos(5\theta)$, while the diamonds represent testing locations.
(a) Streamlines of a shear flow $\uu^\infty(x_1,x_2)=(5x_2,0)$ around an island with no-slip boundary condition. (b) Real part of a wave field generated at the source locations indicated by the dots. The wavenumber is $\kappa=12.5$.
(c) Same as in (b), except that the wavenumber is now $\kappa=12.5+10i$, so the wave decays exponentially}\label{fig:problem_setup}
\end{figure}

\paragraph{Stokes problem.} As shown in Figure \ref{fig:problem_setup}(a), consider a viscous shear flow $\uu^\infty(x_1,x_2)=(5x_2,0)$ around an island whose boundary is a smooth closed curve $\Gamma$, with no-slip boundary conditions on $\Gamma$. Let $\uu$ be the true velocity field and $p$ its associated pressure field, then $(\uu,p)$ is described by the exterior Dirichlet problem for the Stokes equation \cite[\S2.3.2]{hsiao2008boundary}
$$
-\Delta \uu + \nabla p = 0 \;\text{ and }\; \nabla\cdot\uu = 0  \;\text{ in }\Omega,\quad \uu = \mathbf{0} \;\text{ on }\Gamma,\quad
\uu\to\uu^\infty\;\text{ as }|\xx|\to\infty$$
The integral equation formulation for this problem is obtained using the mixed potential representation $\uu(\xx) = \uu^\infty(\xx) +(\mathbf{S} + \mathbf{D})[\Btau](\xx), \xx\in\Omega$ for the velocity \cite[\S4.7]{pozrikidis1992boundary}, where the integral operators
$$
\begin{aligned}
\mathbf{S}[\Btau](\xx)&:=\frac{1}{4\pi}\int_\Gamma\left(-\log r\,\mathbf{I} + \frac{\rr\otimes\rr}{r^2}\right)\Btau(\Brho )\,\d s(\Brho ),\\
\mathbf{D}[\Btau](\xx)&:=\frac{1}{\pi}\int_\Gamma\left(\frac{\rr\cdot\nn(\Brho )}{r^2}\frac{\rr\otimes\rr}{r^2}\right)\Btau(\Brho )\,\d s(\Brho )
\end{aligned}
$$
are the Stokes SLP and DLP in 2D, where $\rr=\xx-\Brho , r=|\rr|$ and $\nn(\Brho)$ is the unit outward normal to $\Gamma$ at $\Brho $, and where $\otimes$ denotes the tensor product. Then the vector-valued unknown density function $\Btau$ is the solution of the following BIE
\begin{equation}
\left(\mathbf{S} + \mathbf{D}+\frac{1}{2}\right)[\Btau](\xx) = -\uu^\infty(\xx),\quad \xx\in\Gamma.
\label{eq:stokes_bie}
\end{equation}
As $\Brho \to\xx\in\Gamma$, the only singular component in the linear operator is the $\log r$ term in $\mathbf{S}$, which can be efficiently handled by the corrected trapezoidal rule \eqref{eq:quad_on_curve_log|x|}.

Figure \ref{fig:quad_compare_stokes} compares the convergence results of solving the Stokes problem using different quadrature methods. We see that all the quadratures have the expected convergence rates, with the Kapur-Rokhlin quadrature yielding higher absolute errors due to its larger correction weights. The Kress quadrature is the most accurate at virtually any given $N$, but the new quadrature of order 16 is remarkably close.

\begin{figure}
    \centering
    \includegraphics[width=0.70\textwidth]{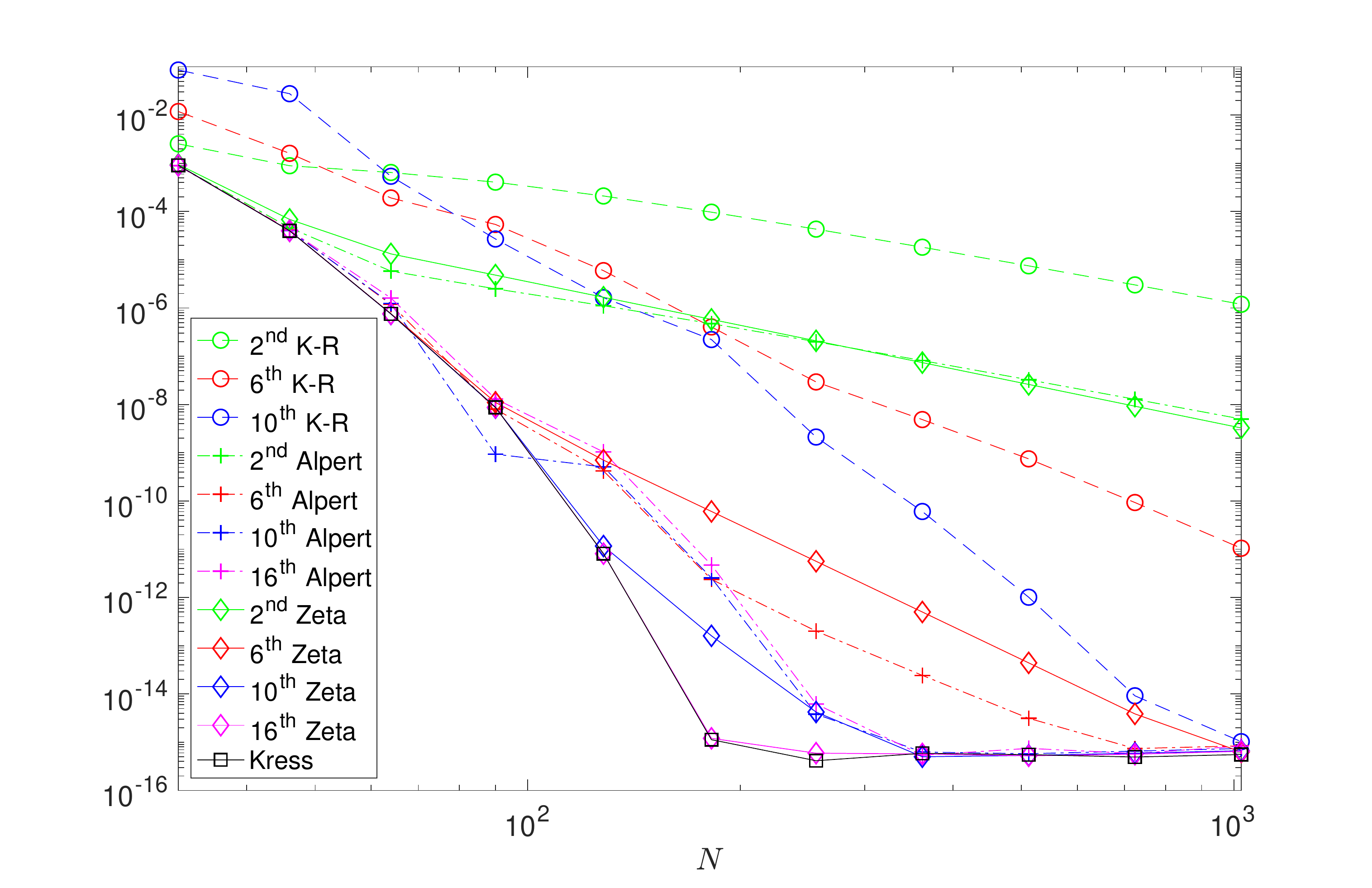}
    \caption{Comparison of singular quadratures for solving the Stokes problem \eqref{eq:stokes_bie} using the setup in Figure \ref{fig:problem_setup}(a). The reference solution is obtained using the Kress quadrature with $N=2000$ points on $\Gamma$.}
    \label{fig:quad_compare_stokes}
\end{figure}

\paragraph{Helmholtz problem.} As shown in Figure \ref{fig:problem_setup}(b) or (c), consider the Helmholtz Dirichlet problem exterior to the curve $\Gamma$, with boundary data $f$ and $u$ satisfying the Sommerfeld radiation condition,
$$
-\Delta u - \kappa^2 u = 0 \;\text{ in }\Omega,\quad u = f \;\text{ on }\Gamma,\quad \lim_{|\xx|\to\infty}|\xx|^{1/2}\left(\frac{\partial u}{\partial|\xx|}-i\,\kappa u\right)=0
$$
where $\kappa\in\CC$ is the wavenumber. For integral equation reformulation, consider the mixed potential assumption $u(\xx) = (D_\kappa - i\,\kappa S_\kappa)[\tau](\xx),\xx\in\Omega$ (see \cite[\S3]{colton2019inverse},\cite{bremer2015high}), where $S_\kappa$ and $D_\kappa$ are the Helmholtz SLP and DLP as defined in the previous section, then the unknown density function $\tau$ is the solution of the BIE
\begin{equation}
\left(\frac{1}{2} + D_\kappa - i\,\kappa S_\kappa\right)[\tau](\xx) = f(\xx),\quad \xx\in\Gamma \label{eq:helmholtz_diri}
\end{equation}
The integral operators in this system can be discretized using the quadratures \eqref{eq:quad_on_curve_H0} and \eqref{eq:quad_on_curve_dnH0}.

\begin{figure}
\small
    \centering
    \begin{tabular}{llp{0.73\textwidth}}
         \textbf{(a)} & 
         \begin{tabular}{l}
              $5\lambda$ \\
              ($\kappa=12.5$)
         \end{tabular} & \includegraphics[align=c,width=0.64\textwidth]{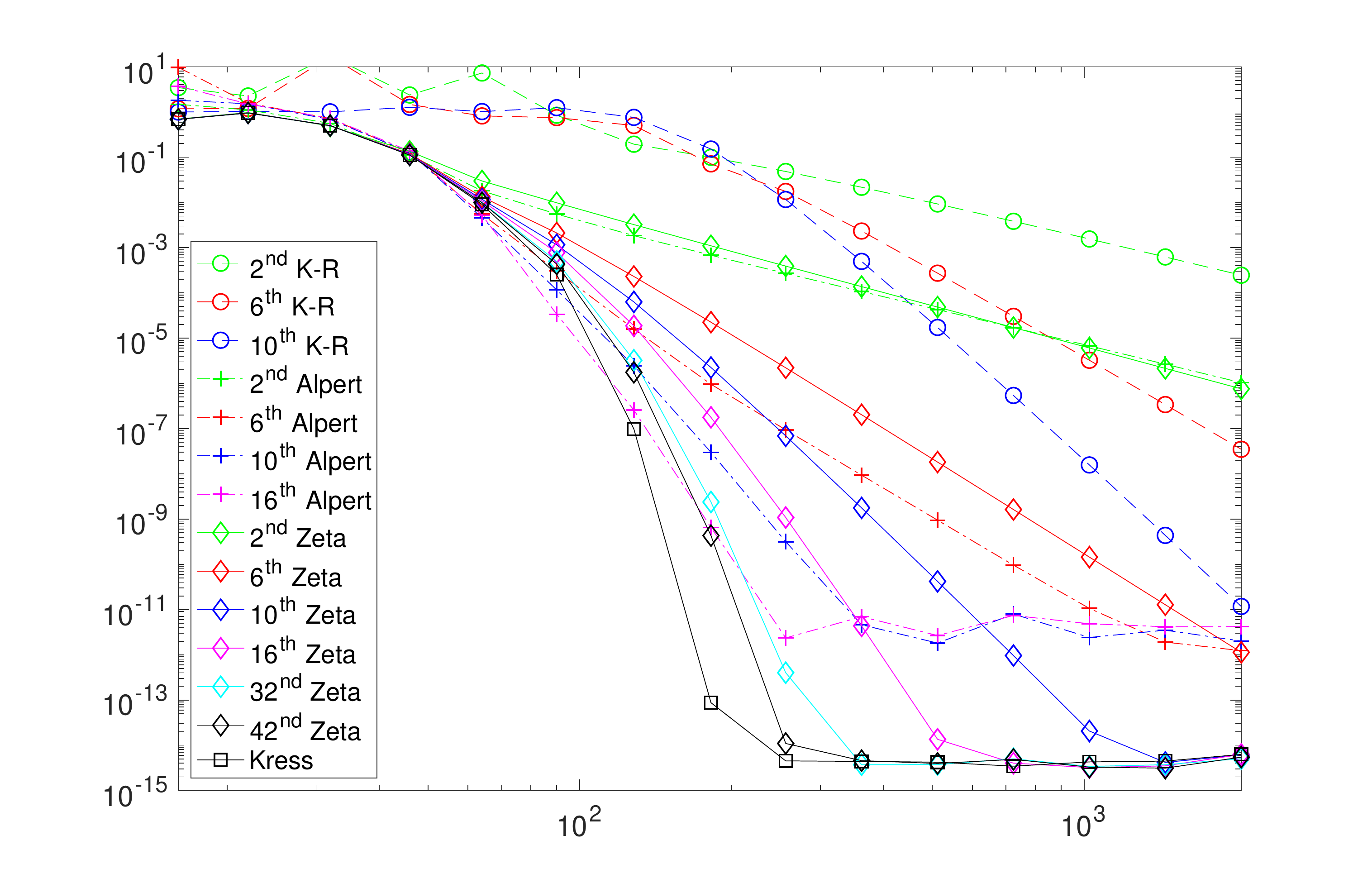} \\
         \textbf{(b)} & 
         \begin{tabular}{l}
              $50\lambda$ \\
              ($\kappa=125$)
         \end{tabular} & \includegraphics[align=c,width=0.64\textwidth]{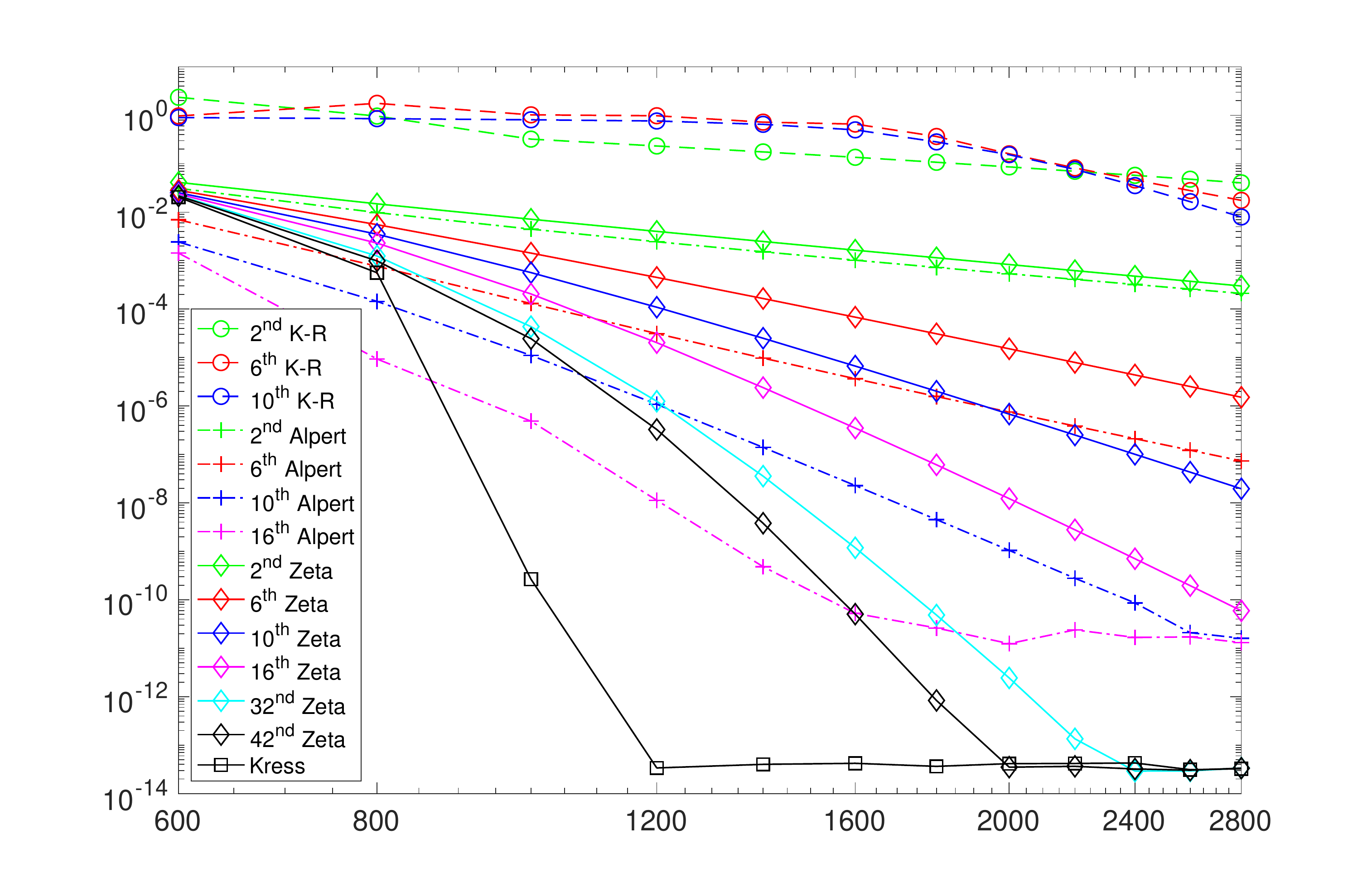} \\
         \textbf{(c)} & 
         \begin{tabular}{l}
              $5\lambda$\\ exp. decay \\
              ($\kappa=12.5+10i$)
         \end{tabular} & \includegraphics[align=c,width=0.64\textwidth]{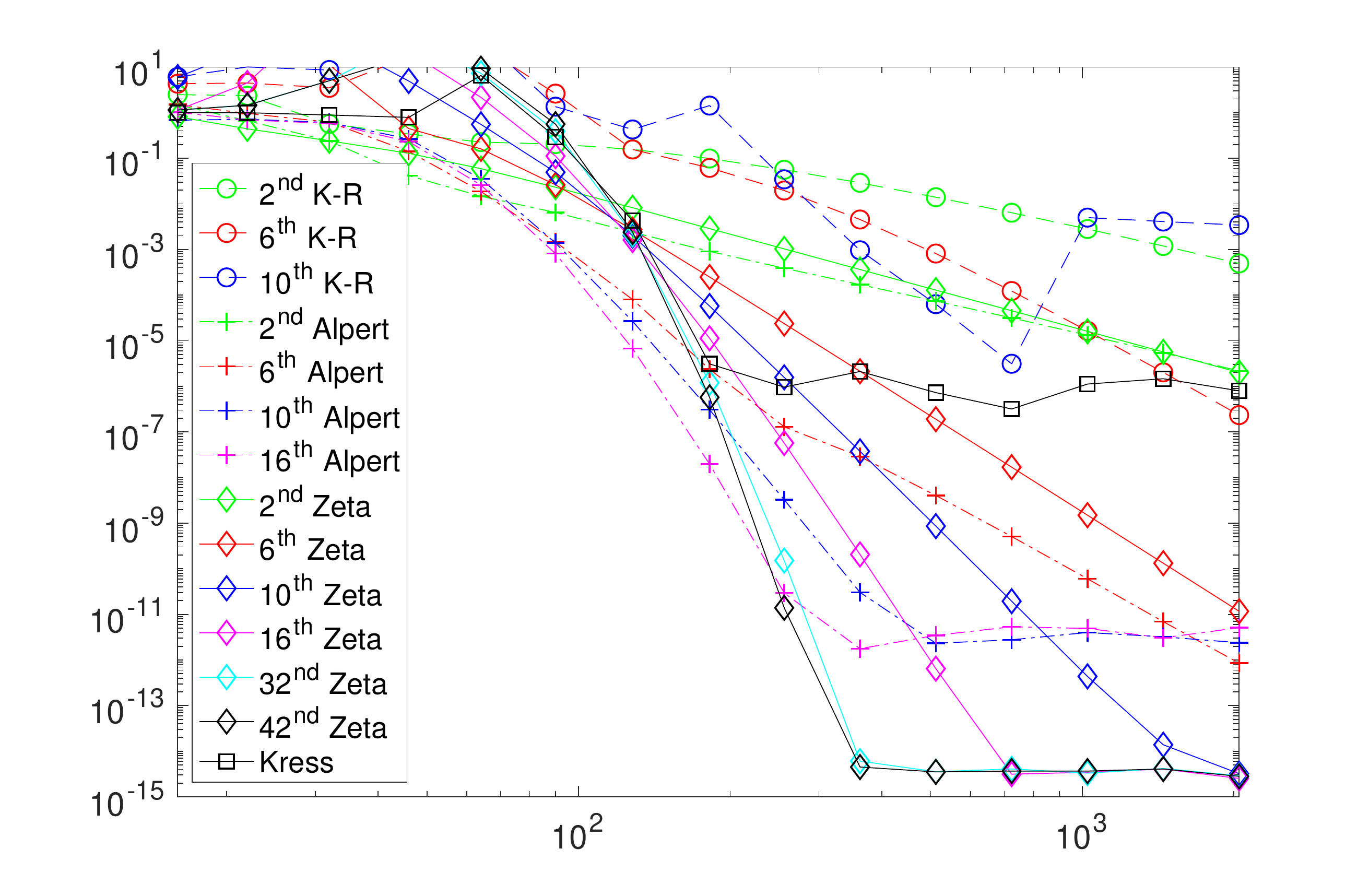}
    \end{tabular}
    \caption{Comparison of singular quadratures for solving the Helmholtz problem \eqref{eq:helmholtz_diri}. Horizontal axes are $N$ (number of points), vertical axes are relative errors. Tests in (a) and (c) correspond to the problem setups in Figure \ref{fig:problem_setup}(b) and \ref{fig:problem_setup}(c), respectively.}
    \label{fig:quad_compare_helmholtz}
\end{figure}

Figure \ref{fig:quad_compare_helmholtz} compares the convergence results of solving this Helmholtz problem using different quadrature methods. We look at the results for different values of $\kappa$ that corresponds to $5$ or $50$ wavelengths across the geometry's diameter.  We observe the following:
\begin{itemize}
\item \textbf{Accuracy comparison.} For a given number of unknowns, the Alpert quadrature achieves slightly higher accuracy than our quadrature of the same order of correction, and the difference is larger for a larger $\kappa$ (higher frequency). The Kapur-Rokhlin quadrature has a harder time to converge to a given accuracy as the frequency becomes higher. The Kress quadrature consistently yields best accuracy for any real $\kappa$. The saturation error of the Alpert quadrature is higher than the other quadratures due to interpolation.
\item \textbf{Work comparison.} For a given order, the Alpert correction requires more work then the other locally corrected quadratures because it requires interpolation for each non-uniform grid point from the nearby uniform grid, leading to modification of a larger bandwidth of the $\mathbf{K}$ matrix \eqref{eq:niceK}. Consequently the bandwidth modified by the $16^\text{th}$ order Alpert quadrature, for example, is almost as large as what is modified by our $42^\text{nd}$ order quadrature. So with the same amount of work, our quadrature is able to obtain a higher accuracy. The Kapur-Rokhlin correction always costs the least amount of work for a given order, since it is an on-grid correction whose weights are independent of the specific form of the kernel. 
\item \textbf{Effect of fast decaying waves.} When $\mathrm{Im}\,\kappa>0$ which corresponds to exponentially decaying waves, the convergence of the Kress quadrature stagnates at a lower accuracy (see Fig. \ref{fig:quad_compare_helmholtz}(c)). This behavior can be explained as follows. In the Kress kernel split
$$\pi i\,H_0(\kappa\,r(x)) = -J_0(\kappa\,r(x))\log\left(4\sin^2\frac{x}{2}\right)+\psi(x),$$
when $r(x)$ becomes bigger, the left-hand side decays exponentially while the Bessel function $J_0$ on the right-hand side grows exponentially, leading to errors due to numerical cancellation. Our local correction method, on the other hand, is immune to such effect since locally $r(x)$ never grown big. In Figure \ref{fig:quad_compare_helmholtz}(c), we let $\mathrm{Im}\,\kappa=10$ to produce a more dramatic stagnation, but it starts to manifest when $\mathrm{Im}\,\kappa=5$.
\end{itemize}

Finally, Table \ref{tab:cond} compares the condition number of the matrix resulted from discretizing \eqref{eq:helmholtz_diri} with different quadratures. As expected, the Kapur-Rokhlin quadrature gives rise to condition numbers that are many times larger than the other quadratures due to its large correction weights. All other quadratures give rise to matrices that are well-conditioned and, accordingly, the number of GMRES iterations required is constant for any sufficiently large $N$ --- except for the case of fast decaying waves mentioned above, where the loss of digits of the Kress quadrature corresponds to a condition number $10^6$ which is orders of magnitude larger than other quadratures.

\begin{table}[htbp]
\small
    \centering
    \begin{tabular}{r*{10}{c}}
    \hline
        Scheme & 6th & 10th & 6th & 10th & 16th & 6th & 10th & 16th & 42th & Kress  \\
        & K-R & K-R & Alpert & Alpert & Alpert & Zeta & Zeta & Zeta & Zeta & \\
        \hline
        ($\mathrm{Im}\,\kappa=0$) cond \# & 30.5 & 182 & 5.32 & 5.38 & 5.34 & 5.32 & 5.32 & 5.32 & 5.32 & 5.32 \\
         \# iters & 139 & 525 & 34 & 34 & 34 & 34 & 34 & 34 & 34 & 34\\
        \hline
        ($\mathrm{Im}\,\kappa=10$) cond \# & 9.69 & 94.0 & 1.80 & 1.82 & 1.81 & 1.80 & 1.80 & 1.80 & 1.80 & $10^6$ \\
        \# iters & 60 & 470 & 18 & 18 & 18 & 18 & 18 & 18 & 18 & 45
    \end{tabular}
    \caption{Condition numbers of the matrix relsulting from discretizing \eqref{eq:helmholtz_diri} with different quadratures and the numbers of GMRES iterations to reach the residual error $10^{-14}$. The top data correspond to $\kappa=12.5$ (Fig. \ref{fig:quad_compare_helmholtz}(a)) and the bottom data to $\kappa=12.5+10i$ (Fig. \ref{fig:quad_compare_helmholtz}(c)).}
    \label{tab:cond}
\end{table}

\section{Conclusion}

In this paper, we described a technique for modifying the trapezoidal quadrature rule to attain high order convergence for integral operators with weakly singular kernels.
The new ``zeta-corrected'' quadrature rule builds correction weights by fitting the error moments on a local sub-grid near the singular point. In the case that the singularity is an algebraic or logarithmic branch point, we have shown that the corresponding error expansion has coefficients expressible as Riemann zeta function values or their derivatives, hence the name ``zeta correction.'' Since the correction is local, our quadrature can be combined with boundary error corrections to also handle non-periodic intervals and open curves.

The zeta correction technique can naturally be generalized to higher dimensions. For instance, using the Epstein zeta function \cite{epstein1903theorie} (a generalization of the  Riemann zeta function), correction weights for the discretization of a BIE on a surface in three dimensions are derived in \cite{wu2020corrected}.

BIE solvers based on the zeta-corrected quadrature are fast, accurate, stable, and easy to implement. The code accompanying this manuscript is published on GitHub which can be accessed at the following link address.
\begin{center}
    \url{https://github.com/bobbielf2/ZetaTrap2D}
\end{center}

\paragraph{Acknowledgements}
The authors would like to thank Alex Barnett for sharing valuable perspectives and insights.
The work reported was supported by the Office of Naval Research
(grant N00014-18-1-2354), and by the National Science Foundation (grants DMS-1620472 and DMS-2012606).

\bibliography{bobi_quadr} 
\bibliographystyle{abbrv}

\end{document}